\newtheorem{df}{Definition}[section]
\newtheorem{thm}[df]{Theorem}
\newtheorem{prop}[df]{Proposition}
\newtheorem{lemm}[df]{Lemma}
\newtheorem{cor}[df]{Corollary}
\newtheorem{fact}[df]{Fact}
\newtheorem{expectation}[df]{Expectation}
\newcommand{\Q}{\mathbb{Q}}
\newcommand{\Z}{\mathbb{Z}}
\newcommand{\shuugou}[1]{\{ #1 \}}
\newcommand{\zettaiti}[1]{\lvert #1 \rvert}
\newcommand{\gyaku}[1]{ #1^{-1}}
\newcommand{\skein}[1]{\mathcal{S}( #1 )}
\newcommand{\defeq}{\stackrel{\mathrm{def.}}{=}}
\newcommand{\Aut}{\mathrm{Aut}}
\newcommand{\bch}{\mathrm{bch}}
\newcommand{\filtn}[1]{\{ #1 \}_{n \geq 0}}
\newcommand{\comp}[1]{\underleftarrow{\lim}_{#1 \rightarrow \infty}}
\newcommand{\gauss}[1]{\lfloor #1 \rfloor}
\newcommand{\arccosh}{\mathrm{arccosh}}
\newcommand{\Poincare}{Poincar\'{e} }
\begin{document}

\title[ An invariant for $\Z HS^3$ via skein algebras]
{Construction of an invariant for integral homology 3-spheres via completed Kauffman
bracket skein algebras }
\author{Shunsuke Tsuji}
\date{}
\maketitle

\begin{abstract}
We construct an invariant $z (M) =1+a_1(A^4-1)+
a_2(A^4-1)^2+a_3(A^4-1)^3 + \cdots \in \Q [[A^4-1]]=\Q [[A+1]]$
for an integral homology $3$-sphere $M$ using
a completed skein algebra and a Heegaard splitting.
The invariant $z(M)\mathrm{mod} ((A+1)^{n+1}) $
is a finite type invariant of order $n$.
In particular, $-a_1/6$ equals the 
Casson invariant.
If $M$ is the Poincar\'{e} homology 3-sphere,
$(z(M))_{|A^4 =q} \mod (q+1)^{14}  $ is the Ohtsuki series \cite{Ohtsuki1995} for $M$.
\end{abstract}

\section{Intoduction }
Heegaard splitting theory clarifies a
relationship between  mapping class groups
on surfaces and  closed oriented 3-manifolds.
In particular, there exists some equivalence relation $\sim$
of Torelli groups of a surface $\Sigma_{g,1}$
with genus $g$ and non-empty connected boundary,
and the well-defined bijective map
\begin{equation*}
\lim_{g \to \infty} \mathcal{I}(\Sigma_{g,1})/ {\sim} \to \mathcal{H} (3)
\end{equation*}
plays an important role, where we denote by  $\mathcal{I}(\Sigma_{g,1})$
the Torelli group of $\Sigma_{g,1}$
 and by $\mathcal{H}(3)$ the set of integral homology 
$3$-spheres, i.e. closed oriented $3$-manifolds
whose homology groups are isomorphic to
the homology group of $S^3$.
For details, see Fact \ref{fact_map_3_manifold} in this paper.
This bijective map makes it possible to study
integral homology $3$-spheres using the structure of Torelli groups.
See, for example, Morita \cite{Morita1989} and
Pitsch \cite{Pitsch2008} \cite{Pitsch2009}.

On the other hand, in
 our previous papers \cite{TsujiCSAI} \cite{Tsujipurebraid} \cite{TsujiTorelli},
we study some new relationship between the Kauffman bracket skein algebra and
the mapping class group of a surface.
It gives us a new way
of studying the mapping class group.
For example \cite{TsujiTorelli}, 
we reconstruct the first Johnson homomorphism
in terms of the skein algebra.
Since the Kauffman bracket skein algebra  comes from 
 link theory, we expect that this relationship
 brings us a new information
of $3$-manifolds. 

The aim of this paper is to construct  an invariant $z(M)$
 for an integral homology
$3$-sphere $M$ using completed skein algebras
and the above bijective map.
In other words, the aim of this paper
is to prove the following main theorem.
\begin{thm}[Theorem \ref{thm_main}]
The map $Z: \mathcal{I}(\Sigma_{g,1}) \to \Q[[A+1]]$
defined by 
\begin{equation*}
Z(\xi) \defeq \sum_{i=0}^\infty \frac{1}{(-A+\gyaku{A})^i i!}e_*
((\zeta (\xi))^i)
\end{equation*}
induces an invariant
\begin{equation*}
z:\mathcal{H} (3) \to \Q[[A+1]], M(\xi) \to Z(\xi),
\end{equation*}
where $e_*$ is the $\Q [[A+1]]$-module homomorphism
induced  by standard embedding.
Here $\zeta :\mathcal{I} (\Sigma_{g,1}) \to
\widehat{\skein{\Sigma_{g,1}}}$ is an embedding
defined in Theorem \ref{thm_zeta}.
\end{thm}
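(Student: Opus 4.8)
The plan is to show that $Z$ factors through the bijection of Fact~\ref{fact_map_3_manifold}, i.e.\ that the value $Z(\xi)$ depends only on the homology sphere $M(\xi)$. That bijection presents $\mathcal{H}(3)$ as $\lim_{g \to \infty} \mathcal{I}(\Sigma_{g,1})/{\sim}$, where the equivalence $\sim$ is generated by the genus stabilization $\mathcal{I}(\Sigma_{g,1}) \hookrightarrow \mathcal{I}(\Sigma_{g+1,1})$ together with the two-sided multiplication $\xi \mapsto \rho_1 \xi \rho_2$ by the Torelli elements $\rho_1,\rho_2$ of the two handlebody mapping class groups glued along the Heegaard surface. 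Accordingly I would reduce the theorem to three assertions: (a) the series defining $Z(\xi)$ converges in $\Q[[A+1]]$; (b) $Z$ is unchanged under stabilization; and (c) $Z(\rho_1 \xi \rho_2) = Z(\xi)$ for all such handlebody elements.

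For (a) I would invoke the filtration $\filtn{F_n \widehat{\skein{\Sigma_{g,1}}}}$ underlying the completion. By Theorem~\ref{thm_zeta} the element $\zeta(\xi)$ lies in positive filtration, so the skein powers $(\zeta(\xi))^i$ have filtration degree increasing with $i$; since $e_*$ is filtration-preserving and $(-A+\gyaku{A})$ is a unit times a positive power of $(A+1)$, each successive term contributes in strictly higher order in $(A+1)$, and the sum converges. The same estimate gives $Z(\xi) \equiv 1 \bmod (A+1)$, matching the leading coefficient recorded in the abstract.

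The heart of the proof is (c), and I expect it to be the main obstacle. Here I would use the multiplicative structure of $\zeta$ coming from Theorem~\ref{thm_zeta}: $\zeta$ is a crossed homomorphism whose associated group map $\xi \mapsto \exp\!\big(\zeta(\xi)/(-A+\gyaku{A})\big)$, formed with the skein product and the Baker--Campbell--Hausdorff relation $\bch$, is a homomorphism into the units of $\widehat{\skein{\Sigma_{g,1}}}$. Thus $Z(\xi) = e_*\big(\exp(\zeta(\xi)/(-A+\gyaku{A}))\big)$ and
\begin{equation*}
Z(\rho_1 \xi \rho_2) = e_*\!\Big( \exp\tfrac{\zeta(\rho_1)}{-A+\gyaku{A}} \cdot \exp\tfrac{\zeta(\xi)}{-A+\gyaku{A}} \cdot \exp\tfrac{\zeta(\rho_2)}{-A+\gyaku{A}} \Big).
\end{equation*}
The key geometric input is that a handlebody element $\rho$ extends over the handlebody it bounds, so that the composite embedding $e \circ \rho$ is isotopic to $e$ after inclusion into the target; hence $e_* \circ \rho_* = e_*$, and correspondingly $e_*$ should absorb the factors coming from $\zeta(\rho_1)$ and $\zeta(\rho_2)$. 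The difficulty is that $e_*$ is only a $\Q[[A+1]]$-module map, not an algebra homomorphism, so one cannot simply split the product; instead I would commute the outer handlebody factors across the central factor using the $\bch$ identity together with the invariance $e_*\circ\rho_*=e_*$, checking order by order in the filtration that the contributions of $\rho_1$ and $\rho_2$ cancel.

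Finally, for (b) I would verify that the standard embedding $e$ and the map $\zeta$ are compatible with stabilization: the extra handle introduced by $\Sigma_{g,1}\hookrightarrow\Sigma_{g+1,1}$ bounds trivially after the standard embedding, so its contribution to $e_*\big(\exp(\zeta(\xi)/(-A+\gyaku{A}))\big)$ reduces to the class of the empty skein, leaving $Z$ unchanged. Combining (a)--(c) with Fact~\ref{fact_map_3_manifold} shows that $M(\xi)\mapsto Z(\xi)$ is well defined, yielding the desired invariant $z:\mathcal{H}(3)\to\Q[[A+1]]$.
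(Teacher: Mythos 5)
Your outline (convergence, stabilization, invariance under the handlebody action) is the right shape, but the two steps that carry all the content are not actually established, and one of the inputs you rely on is false as stated. First, your description of the equivalence relation is incomplete: by definition $\xi_1\sim\xi_2$ means $\xi_1=\eta^-\xi_2\eta^+$ with $\eta^\pm$ arbitrary elements of $\mathcal{M}(H_{g,1}^\pm)$, \emph{not} Torelli elements. Reducing to the form you use --- one-sided multiplication by elements of $\mathcal{IM}(H_{g,1}^\pm)$ --- is exactly the content of Pitsch's lemma (Lemma \ref{lemm_torelli_check} in the paper), and the price of that reduction is an extra family of moves: conjugation by a subgroup $G\subset\mathcal{M}(H_{g,1}^+)\cap\mathcal{M}(H_{g,1}^-)$ surjecting onto $\Aut(H_1(H_{g,1}^+))$, generated by the $h_i$ and the non-Torelli elements $s_{ij}=t_{c_{i,j}}t_{c_{a,i}}^{-1}t_{c_{b,j}}^{-1}$. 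Invariance of $Z$ under conjugation by $s_{ij}$ is not a formality (it is proved by a skein computation showing $\iota^\epsilon(L(c_{i,j})-L(c_{a,i})-L(c_{b,j}))=0$), and your proof never addresses this family at all. One also needs the generation statement for $\mathcal{IM}(H_{g,1}^\pm)$ (Lemma \ref{lemm_torelli_handle_generator}) to reduce step (c) to specific BP-twists $t_{\xi(c_a)\xi(c'_a)}$, $t_{\xi(c_b)\xi(c'_b)}$.

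Second, the "key geometric input" of your step (c) is wrong: for $\rho\in\mathcal{M}(H_{g,1}^+)$ the embeddings $e\circ\rho$ and $e$ need not be isotopic in $S^3$, because $\rho$ extends over $H_g^+$ only, not over $S^3$. (Already for $g=1$, the twist along a meridian disk of the solid torus changes the framing of the core, hence changes Kauffman brackets; so $e_*\circ\rho_*\neq e_*$ in general.) The isotopy argument is valid only for the $h_i$, which extend over both handlebodies. The mechanism that actually makes (c) work is entirely different and is the heart of the paper's proof: for a generator $\eta^+$ of $\mathcal{IM}(H_{g,1}^+)$ one has $\zeta(\eta^+)=\xi(L(c_a))-\xi(L(c'_a))\in\ker\iota^+$ because $c_a$ and $c'_a$ cobound an annulus inside $H_g^+$ (Lemma \ref{lemm_proof_key}); the kernels of $\iota^\pm$ are one-sided ideals killed by $e_*$ (Proposition \ref{prop_ideal}); and by the product formula $Z(\xi\eta^+)=\sum_{i,j}\frac{1}{(-A+\gyaku{A})^{i+j}i!j!}e_*((\zeta(\xi))^i(\zeta(\eta^+))^j)$ every term with $j\geq 1$ is annihilated, leaving $Z(\xi)$. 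Your proposal correctly identifies that $e_*$ is only a module map and that the product cannot be split, but "commuting the outer factors across the central one order by order" is not an argument --- without the one-sided ideal structure of $\ker\iota^\pm$ and the vanishing $\iota^\pm(\zeta(\eta^\pm))=0$ there is no reason for the cancellation to occur, so the proof has a genuine gap at its central step.
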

We remark we do not rely on number theory
for constructing the invariant.

Let $V$ be a $\Q$-vector space.
In our paper, a map $z':\mathcal{H}(3) \to V$
is called a finite type invariant of 
order $n$ if and only if
the $\Q$-linear map
$z':\Q \mathcal{H}(3) \to V$
induced by $z':\mathcal{H}(3) \to V$
satisfies the condition that
\begin{equation*}
\sum_{\epsilon_i \in \shuugou{0,1}} (-1)^{\sum \epsilon_i}z'(M(\prod_{i=1}^{2n+2} {\xi_i}^{\epsilon_i}))=0.
\end{equation*}
for any $\xi_1,\xi_2, \cdots, \xi_{2n+2} \in 
\mathcal{I}(\Sigma_{g,1})$.
The above condition and the condition
that
\begin{equation*}
\sum_{\epsilon_i \in \shuugou{0,1}} (-1)^{\sum \epsilon_i}z'(M(\prod_{i=1}^{n+1} {\xi_i}^{\epsilon_i}))=0.
\end{equation*}
for any $\xi_1,\xi_2, \cdots, \xi_{n+1} \in 
\mathcal{K}(\Sigma_{g,1})$
are equivalent
to each other.
This follows from
 \cite{GL1997} Theorem 1 and \cite{GGP2001} subsection 1.8.
Furthermore, in our paper,
a finite type invariant 
$z': \mathcal{H}(3) \to V$
of order $n$
is called nontrivial
if and only if
the $\Q$-linear map
$z':\Q \mathcal{H}(3) \to V$
induced by $z':\mathcal{H}(3) \to V$
satisfies the condition that
there exists $\xi_1, \xi_2, \cdots, \xi_{2n} \in
\mathcal{I}(\Sigma_{g,1})$ such  that
\begin{equation*}
\sum_{\epsilon_i \in \shuugou{0,1}} (-1)^{\sum \epsilon_i}z'(M(\prod_{i=1}^{2n} {\xi_i}^{\epsilon_i})) \neq 0.
\end{equation*}
By \cite{GL1997} Theorem 1 and \cite{GGP2001} subsection 1.8,
the above condition and the condition
that there exists $\xi_1, \xi_2, \cdots, \xi_{n} \in
\mathcal{K}(\Sigma_{g,1})$ such  that
\begin{equation*}
\sum_{\epsilon_i \in \shuugou{0,1}} (-1)^{\sum \epsilon_i}z'(M(\prod_{i=1}^{n} {\xi_i}^{\epsilon_i})) \neq 0.
\end{equation*}
are equivalent to each other.
The invariant $z: \mathcal{H}(3) \to \Q[[A+1]]$
defined in this paper induces 
a finite type invariant 
$z(M) \in \Q[[A+1]]/((A+1)^n)$ of order $n+1$
for $M \in \mathcal{H} (3)$ (Corollary \ref{cor_finite_type}).
In Proposition \ref{prop_z_finite_nontrivial},
we prove
the finite type invariant
$z(M) \in \Q[[A+1]]/((A+1)^n)$ of order $n+1$
is nontrivial,
where we use 
a connected sum of the \Poincare spheres.

Furthermore,
we give some computations of  this invariant $z$ for some 
integral homology 3-spheres.
As a corollary of this computation, the coefficient of $(A^4-1)$ in $z$ is $(-6)$ times
the Casson invariant.
On the other hand, Ohtsuki \cite{Ohtsuki1995}
defined the Ohtsuki series $\tau:\mathcal{H}(3) \to
\Z [[q]]$.
If $M$ is the \Poincare homology $3$-sphere,
$z(M) \mod ((A+1)^{14})$ is equal to 
$ \tau (M)_{|q=A^4} \mod ((A+1)^{14})$.
This lead us the following.

\begin{expectation}
Using the change of variables $A^4=q$,
the invariant $z$ induces
the Ohtsuki series $ \tau$,
in other words, we have $z(M) = \tau (M)_{|q=A^4}$ for any $M \in
\mathcal{H}(3)$.
\end{expectation}

\tableofcontents

\section*{Acknowledgements}
The author would like to thank Kazuo Habiro, Gw\'{e}na\"{e}l Massuyeau, Jun Murakami
and Tomotada Ohtsuki for
helpful comments about finite type invariatns of integral homology 
3-spheres.
The author must make special mention of Genki Omori,
who explain the works
of Pitsch \cite{Pitsch2009}
in order to prove  Lemma \ref{lemm_torelli_check}.
This work was supported by JSPS KAKENHI Grant Number 15J05288 
and the Leading Graduate Course for Frontiers of Mathematical Sciences and Phsyics.

\section{Mapping class grups and closed 3-manifolds}

Let $\Sigma_g$ denote an closed oriented surface of genus 
$g$ standardly embedded in the oriented $3$-sphere $S^3$.
The embedded surface $\Sigma_g$ separates $S^3$ into two
handle bodies of genus $g$, $S^3 =H_g^+ \cup_\varphi H_g^-$
where $\varphi : \Sigma_g =\partial H_g^+ \to \partial H_g^-$
is a diffeomorphism. We fix an closed disk $D$ in $\Sigma_g$
and denote by $\Sigma_{g,1}$ the closure of $ \Sigma_g \backslash D$.
The embedding $\Sigma_g \hookrightarrow S^3$ 
determines two natural subgroups of 
\begin{equation*}
\mathcal{M}(\Sigma_{g,1})
\defeq \mathrm{Diff}^+ (\Sigma_{g,1}, \partial \Sigma_{g,1})/
\mathrm{Diff}_0(\Sigma_{g,1}, \partial \Sigma_{g,1}),
\end{equation*}
namely
\begin{equation*}
\mathcal{M}(H_{g,1}^\epsilon)
\defeq \mathrm{Diff}^+ (H_{g}^\epsilon, D)/
\mathrm{Diff}_0(H_{g}^\epsilon, D).
\end{equation*}
for $\epsilon \in \shuugou{+,-}$.
We denote $M(\xi) \defeq H_g^+ \cup_{\varphi \circ \xi} H_g^-$.
Let $\mathcal{I} (\Sigma_{g,1}) $ be the Torelli group of the surface $\Sigma_{g,1}$,
which is the set consisting of all elements of $\mathcal{M}(\Sigma_{g,1})$
acting trivially on $H_1(\Sigma_{g,1})$.
We remark that
there is a natural injective stabilization map $\mathcal{M}(\Sigma_{g,1}) \hookrightarrow
\mathcal{M}(\Sigma_{g+1,1})$, which is
compatible with the definitions of the above two subgroups.

\begin{df}
For $\xi_1$ and $\xi_2 \in \mathcal{I}(\Sigma_{g,1})$, we define
$\xi_1 \sim \xi_2$ if there exist
$\eta^+ \in \mathcal{M}(H_{g,1}^+)$ and $\eta^- \in \mathcal{M}(H_{g,1}^-)$
satisfying $\xi_1 =\eta^- \xi_2 \eta^+$.
\end{df}

\begin{fact}[For example, see \cite{Morita1989} \cite{Pitsch2008}\cite{Pitsch2009}]
\label{fact_map_3_manifold}
The map 
\begin{equation*}
\underrightarrow{\lim}_{g \rightarrow \infty} (\mathcal{I}(\Sigma_{g,1})/\sim )
\to \mathcal{H}(3),\xi \mapsto M(\xi)
\end{equation*}
is bijective, where $\mathcal{H}(3)$ is the set of integral homology
$3$-spheres, i.e.,closed oriented $3$-manifolods
 whose homology group is isomorphic to
the homology group of $S^3$.
\end{fact}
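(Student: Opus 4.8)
The plan is to verify in turn that the map $\xi \mapsto M(\xi)$ is well defined, surjective, and injective, relying throughout on standard Heegaard splitting theory (this is why the statement is recorded as a Fact, with the careful bookkeeping carried out in \cite{Morita1989} \cite{Pitsch2008} \cite{Pitsch2009}).

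For well-definedness there are three things to check. First, that $M(\xi)$ really is an integral homology $3$-sphere when $\xi \in \mathcal{I}(\Sigma_{g,1})$: a Mayer--Vietoris computation for the decomposition $M(\xi) = H_g^+ \cup_{\varphi \circ \xi} H_g^-$ shows that $H_*(M(\xi))$ is governed by the action of $\xi$ on $H_1(\Sigma_g) \cong \Z^{2g}$ relative to the Lagrangians $\ker(H_1(\Sigma_g) \to H_1(H_g^\pm))$ spanned by the meridians; since $\xi$ acts trivially on $H_1(\Sigma_g)$ the gluing has the same homological effect as for $S^3 = M(\id)$, forcing $M(\xi)$ to be a homology sphere. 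Second, that $M(\xi)$ is invariant under $\sim$: if $\xi_1 = \eta^- \xi_2 \eta^+$ with $\eta^+ \in \mathcal{M}(H_{g,1}^+)$ and $\eta^- \in \mathcal{M}(H_{g,1}^-)$, then $\eta^+$ extends to a diffeomorphism of $H_g^+$ and $\eta^-$ to one of $H_g^-$, and applying these extensions on the two sides of the splitting produces a diffeomorphism $M(\xi_1) \cong M(\xi_2)$; concretely, $M(\xi)$ depends only on the double coset of $\varphi \circ \xi$ modulo the two handlebody subgroups. Third, compatibility with stabilization: the image of $\xi$ under $\mathcal{M}(\Sigma_{g,1}) \hookrightarrow \mathcal{M}(\Sigma_{g+1,1})$ corresponds to taking the connected sum of the Heegaard splitting with the genus-one splitting of $S^3$, which does not change the glued manifold, so the map descends to the direct limit.

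For surjectivity I would use that every closed oriented $3$-manifold admits a Heegaard splitting $M = W^+ \cup_\psi W^-$ of some genus $g$; choosing orientation-preserving identifications $W^\pm \cong H_g^\pm$ realizes $M \cong M(\psi')$ for some $\psi' \in \mathcal{M}(\Sigma_g)$. If $M$ is a homology sphere, the symplectic image of $\psi'$ must carry the Lagrangian of $H_g^-$ to a complement of that of $H_g^+$, and by transitivity of the handlebody subgroups on such transverse Lagrangian gluings (after stabilizing if necessary) I can modify $\psi'$ on both sides by handlebody mapping classes to kill its symplectic part, i.e.\ to arrange that the corrected gluing lies in $\mathcal{I}(\Sigma_{g,1})$. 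This produces a preimage in $\varinjlim_g(\mathcal{I}(\Sigma_{g,1})/\sim)$.

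The injectivity is the heart of the matter and the step I expect to be the main obstacle. Suppose $M(\xi_1) \cong M(\xi_2)$. By the Reidemeister--Singer theorem, any two Heegaard splittings of a fixed $3$-manifold become isotopic after sufficiently many stabilizations; applied to the two splittings of $M(\xi_1) \cong M(\xi_2)$ this yields, after passing high enough in the genus and invoking the stabilization compatibility above, a diffeomorphism intertwining the two splittings. Translating this geometric equivalence back into the mapping class group, the intertwining diffeomorphism restricts on the two handlebodies to elements $\eta^+ \in \mathcal{M}(H_{g,1}^+)$ and $\eta^- \in \mathcal{M}(H_{g,1}^-)$ with $\xi_1 = \eta^- \xi_2 \eta^+$ in the limit, i.e.\ $\xi_1 \sim \xi_2$. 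The delicate points are controlling the fixed disk $D$ and the basepoint data throughout the stabilizations, and checking that the diffeomorphism furnished by Reidemeister--Singer can be taken to preserve the two handlebodies separately rather than merely interchanging the two sides of the splitting; this is precisely where the cited work of Morita and Pitsch does the careful verification.
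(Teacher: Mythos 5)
The paper offers no proof of this statement: it is recorded as a Fact and deferred entirely to the cited references of Morita and Pitsch. Your outline --- Mayer--Vietoris and the double-coset description for well-definedness, existence of Heegaard splittings plus correction of the gluing by handlebody mapping classes for surjectivity, and Reidemeister--Singer stabilization for injectivity --- is precisely the standard argument carried out in those references, and you correctly identify the genuinely delicate points (keeping track of the disk $D$ through stabilization and arranging the intertwining diffeomorphism to preserve each handlebody rather than swap them) as the places where the cited work does the real bookkeeping.
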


We denote $\mathcal{IM} (H_{g,1}^\epsilon)
\defeq \mathcal{I} (\Sigma_{g,1}) \cap \mathcal{M}(H_{g,1}^\epsilon)$
for $\epsilon \in \shuugou{+,-}$.

\begin{lemm}[ Pitsch \cite{Pitsch2009}, Theorem 9, P.295, Omori \cite{Omori2016}]
\label{lemm_torelli_handle_generator}
For $\epsilon \in \shuugou{+,-}$, the subgroup $\mathcal{IM} (H_{g,1}^\epsilon)$ is generated by
\begin{equation*}
\shuugou{t_{\xi(c_a) \xi( c'_a)}|\xi \in \mathcal{M} (H_{g,1}^\epsilon)}
\cup \shuugou{t_{\xi(c_b) \xi(c'_b)}|\xi \in \mathcal{M} (H_{g,1}^\epsilon)},
\end{equation*}
where the simple closed curves $c_a ,c'_a,c_b$ and $c'_b$
are as in Figure \ref{figure_handle_torelli_generator}.

\end{lemm}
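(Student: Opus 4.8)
The plan is to transport the problem to the automorphism group of a free group, where an explicit generating set is available, and then to realize the required generators geometrically as bounding pair maps. Fix $\epsilon=+$; the case $\epsilon=-$ is symmetric. Since $\pi_1(H_g^+)$ is free of rank $g$, restricting each diffeomorphism to the interior and using a basepoint on the marked disk $D$ (which is fixed pointwise) yields a homomorphism $\Phi\colon \mathcal{M}(H_{g,1}^+)\to \mathrm{Aut}(F_g)$. First I would recall two structural facts: $\Phi$ is surjective, and its kernel is generated by Dehn twists along boundaries of properly embedded meridian disks. Because an element of $\mathcal{I}(\Sigma_{g,1})$ acts trivially on $H_1(\Sigma_{g,1})$ and $H_1(H_g^+)$ is a quotient of it, $\Phi$ carries $\mathcal{IM}(H_{g,1}^+)$ into the IA-subgroup $\mathrm{IA}_g=\ker(\mathrm{Aut}(F_g)\to \mathrm{GL}_g(\Z))$. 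A short homological computation, comparing the action on the Lagrangian $\ker(H_1(\Sigma_g)\to H_1(H_g^+))$ with the action on the quotient, then shows $\mathcal{M}(H_{g,1}^+)\supseteq \Phi^{-1}(\mathrm{IA}_g)=\mathcal{IM}(H_{g,1}^+)\cdot\ker\Phi$, so that $\Phi$ restricts to a surjection and we obtain the organizing exact sequence
\begin{equation*}
1 \to \ker\Phi\cap\mathcal{IM}(H_{g,1}^+) \to \mathcal{IM}(H_{g,1}^+) \xrightarrow{\ \Phi\ } \mathrm{IA}_g \to 1 .
\end{equation*}

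The next step is to generate the quotient end using the Magnus generating set of $\mathrm{IA}_g$, namely the conjugations $\mu_{ij}\colon x_i\mapsto x_j x_i x_j^{-1}$ and the commutator transvections $\mu_{ijk}\colon x_i\mapsto x_i[x_j,x_k]$. Each $\mu_{ij}$ and each $\mu_{ijk}$ is realized, modulo $\ker\Phi$, by an explicit mapping class supported near a handle, and these are exactly the two bounding pair maps $t_{c_a c'_a}$ and $t_{c_b c'_b}$ drawn in Figure \ref{figure_handle_torelli_generator}: one family produces the conjugation type and the other the commutator type. The crucial ingredient is then a transitivity statement, that the handlebody group $\mathcal{M}(H_{g,1}^+)$ moves these two model bounding pairs onto all of their relabellings, so that every $\mu_{ij}$ and $\mu_{ijk}$ arises as $\Phi(\xi\, t_{c_a c'_a}\,\xi^{-1})$ or $\Phi(\xi\, t_{c_b c'_b}\,\xi^{-1})$ for a suitable $\xi\in \mathcal{M}(H_{g,1}^+)$. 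Granting this, the displayed set of conjugates surjects under $\Phi$ onto a generating set of $\mathrm{IA}_g$.

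It remains, and this is the heart of the matter, to dispose of the kernel: I must show that every element of $\ker\Phi\cap\mathcal{IM}(H_{g,1}^+)$ — a product of meridian disk twists that happens to act trivially on $H_1(\Sigma_{g,1})$, such as a twist along a separating meridian or a bounding pair of two parallel meridians — already lies in the subgroup generated by the two displayed families. The point to exploit is that among the conjugates $\xi\, t_{c_a c'_a}\,\xi^{-1}$ and $\xi\, t_{c_b c'_b}\,\xi^{-1}$ are configurations whose curves become meridians, so that the same two families do double duty, producing kernel elements as well as lifts of the Magnus generators. Concretely I would rewrite the separating-meridian twists and meridian bounding pairs using lantern and chain relations among the $t_{\xi(c_a)\xi(c'_a)}$ and $t_{\xi(c_b)\xi(c'_b)}$, following Pitsch's treatment of the Lagrangian-preserving Torelli group and Omori's explicit curve bookkeeping. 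The main obstacle is precisely that these relations must be realized by handlebody-extendable conjugations rather than by arbitrary mapping classes of $\Sigma_{g,1}$: a relation valid on the surface need not survive the constraint $\xi\in\mathcal{M}(H_{g,1}^+)$, and it is here that the particular choice of the four curves $c_a,c'_a,c_b,c'_b$, adapted to the meridian structure of $H_g^+$, carries the weight of the argument.
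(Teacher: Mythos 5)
Your overall skeleton coincides with the paper's: both arguments run the extension
\begin{equation*}
1 \to \ker\Phi\cap\mathcal{IM}(H_{g,1}^+) \to \mathcal{IM}(H_{g,1}^+) \xrightarrow{\ \Phi\ } I\Aut \pi_1(H_g^+,*) \to 1,
\end{equation*}
where the kernel is the Luft--Torelli group $\mathcal{LIM}(H_{g,1}^+)$, generate the quotient by Magnus's generating set of the IA-automorphisms, and then generate the kernel separately. One minor discrepancy: in the paper the two families have sharply separated roles. The conjugates of $t_{c_b c'_b}$ alone realize \emph{all} of $I\Aut\pi_1(H_g^+,*)$ --- both conjugation-type and commutator-type Magnus generators --- which is exactly what \cite{MKS}, Theorem N4 is invoked for; the conjugates of $t_{c_a c'_a}$ lie entirely in the Luft--Torelli group, hence in $\ker\Phi$, and contribute nothing to the quotient. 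Your statement that ``one family produces the conjugation type and the other the commutator type'' is therefore off: modulo $\ker\Phi$ the $a$-family produces only the identity, and the $b$-family has to do all the work on the quotient.

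The serious point is the kernel step, which you correctly identify as the heart of the matter but do not carry out. The claim that every element of $\ker\Phi\cap\mathcal{IM}(H_{g,1}^+)$ is a product of conjugates $\xi\, t_{c_a c'_a}\,\xi^{-1}$ with $\xi\in\mathcal{M}(H_{g,1}^+)$ is precisely Pitsch's Theorem 9 (\cite{Pitsch2009}, p.~295), and the paper disposes of the kernel by citing that theorem verbatim. Your proposal instead gestures at rewriting separating-meridian twists and meridian bounding pairs via lantern and chain relations, and you explicitly flag --- without resolving --- the obstacle that these relations must be implemented by handlebody-extendable conjugating maps rather than arbitrary elements of $\mathcal{M}(\Sigma_{g,1})$. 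That obstacle is exactly the nontrivial content of Pitsch's theorem, so as written your argument has a gap at the only genuinely hard input; either cite Pitsch's Theorem 9 (and Omori's communication for the precise curve configurations) at that point, or supply the relation-chasing in full.
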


\begin{figure}
\begin{picture}(300,140)
\put(0,0){\includegraphics[width=300pt]{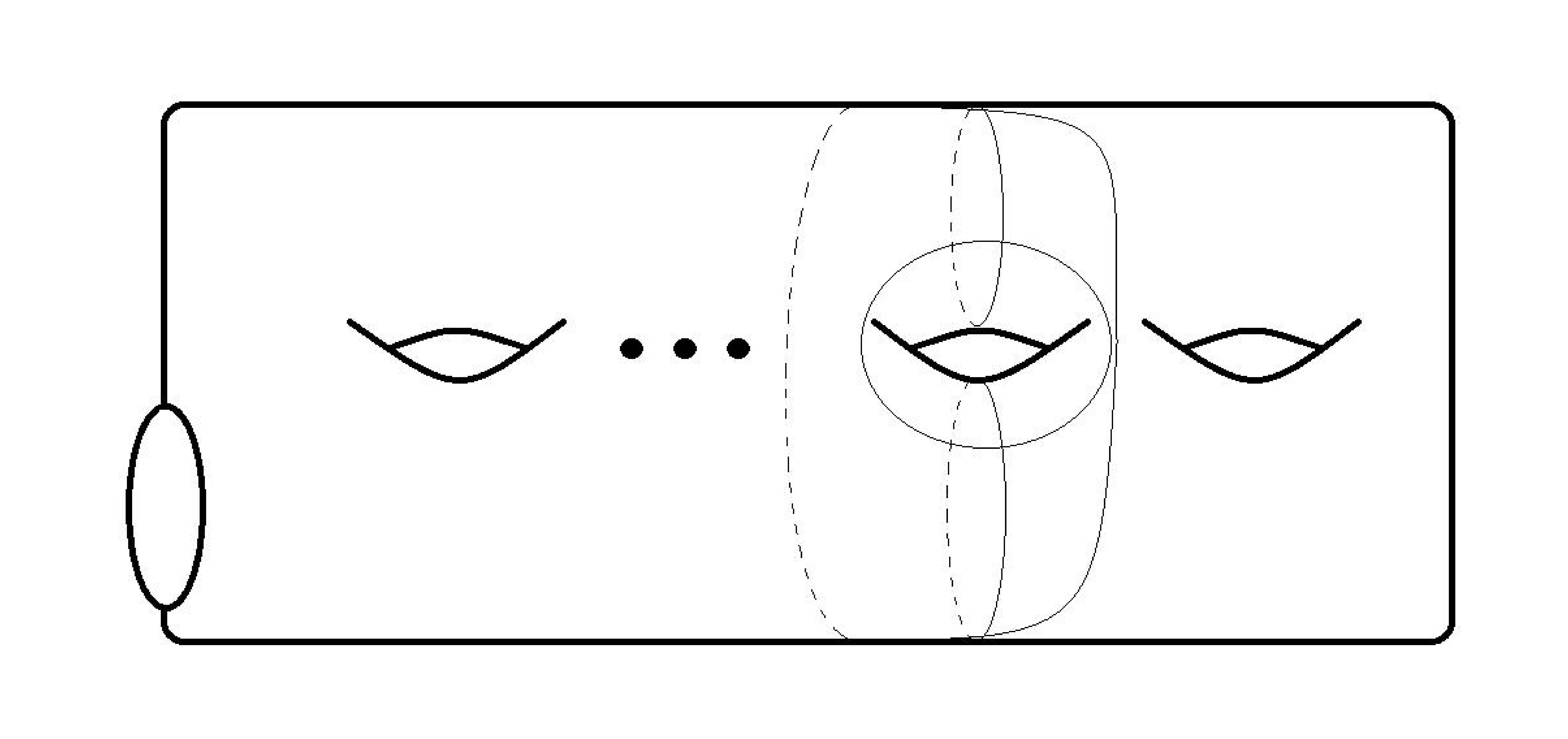}}
\put(37,90){$H_g^+$}
\put(183,112){$c_a$}
\put(183,30){$c'_a$}
\put(200,63){$c_b$}
\put(212,30){$c'_b$}
\put(14,90){$H_g^-$}
\end{picture}
\caption{$c_a ,c'_a,c_b$ and $c'_b$}
\label{figure_handle_torelli_generator}
\end{figure}

\begin{proof}
We prove the lemma in the case $\epsilon$ is $+$.
Let $I \Aut \pi_1(H_g^+,*)$ be the kernel of
$\Aut \pi_1(H_g^+,*) \twoheadrightarrow \Aut (H_1 (H_g))$.
By \cite{MKS}, Theorem N4, p.168,
$I \Aut \pi_1 (H_g^+,*)$ is generated by
\begin{equation*}
\shuugou{x_* \in I \Aut \pi_1(H_g^+,*)
|x \in \shuugou{t_{\xi(c_b) \xi(c'_b)}|\xi \in \mathcal{M} (H_{g,1}^+)}},
\end{equation*}
where we denote by $x_*$
the element of $I \Aut \pi_1(H_g^+,*)$ induced by $x
\in \shuugou{t_{\xi(c_b) \xi(c'_b)}|\xi \in \mathcal{M} (H_{g,1}^+)}$.
We denote by $\mathcal{LIM} (H_{g,1}^+)$ the Luft-Torelli group
which is the kernel of
$\mathcal{IM} (H_{g,1}^+) \to I \Aut \pi_1(H_g^+,*)$.
Pitsch \cite{Pitsch2009}, Theorem 9, P.295 proves that
$\mathcal{LIM} (H_{g,1}^+)$ is generated by
\begin{equation*}
\shuugou{t_{\xi(c_a) \xi( c'_a)}|\xi \in \mathcal{M} (H_{g,1}^+)}.
\end{equation*}
This proves the case that $\epsilon$ is $ +$.
If $\epsilon$ is $-$, replacing $a$ by $b$, 
the same proof works. 
This finishes the proof.
\end{proof}

\begin{lemm}[\cite{Pitsch2009}, Lemma 4, p.285]
Let $G$ be a subgroup of $\mathcal{M}(H_{g,1}^+)
\cap \mathcal{M}(H_{g,1}^-)$ such that
the natural map $G \to \Aut (H_1(H_{g,1}^+))$ is onto.
For two elements $\xi_1$ and $\xi_2 \in \mathcal{I} (\Sigma_{g,1})$,
$\xi_1 \sim \xi_2$ if and only if 
there exist $\eta_G \in G$, $\eta^+ \in \mathcal{IM} (H_{g,1}^+)$
and $\eta^- \in \mathcal{IM} (H_{g,1}^-)$ satisfying
$\eta^- \eta_G \xi_1 {\eta_G}^{-1} \eta^+ =\xi_2$.
\end{lemm}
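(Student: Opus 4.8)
The plan is to prove the two implications separately, the forward direction being the substantive one. Throughout I write $\rho \colon \mathcal{M}(\Sigma_{g,1}) \to \Aut(H_1(\Sigma_{g,1}))$ for the homological representation, so that $\mathcal{I}(\Sigma_{g,1}) = \ker \rho$ and $\mathcal{IM}(H_{g,1}^\epsilon) = \mathcal{M}(H_{g,1}^\epsilon) \cap \ker \rho$. First I would record the relevant symplectic picture. For $\epsilon \in \shuugou{+,-}$ set $L^\epsilon \defeq \ker(H_1(\Sigma_{g,1}) \to H_1(H_g^\epsilon))$; this is a Lagrangian, every element of $\mathcal{M}(H_{g,1}^\epsilon)$ preserves $L^\epsilon$, and since $H_g^+ \cup H_g^- = S^3$ the Lagrangians $L^+$ and $L^-$ are complementary, $H_1(\Sigma_{g,1}) = L^+ \oplus L^-$. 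Let $\mathcal{L} \defeq \mathrm{Stab}(L^+) \cap \mathrm{Stab}(L^-)$. An element of $\mathcal{L}$ is block diagonal for this splitting, and its block on $L^+$ is the inverse transpose (with respect to the intersection pairing between $L^+$ and $L^-$) of its block on $L^-$; hence the quotient map $\mathcal{L} \to \Aut(H_1(\Sigma_{g,1})/L^+) = \Aut(H_1(H_g^+))$ is an isomorphism. Since $G \subseteq \mathcal{M}(H_{g,1}^+) \cap \mathcal{M}(H_{g,1}^-)$ we have $\rho(G) \subseteq \mathcal{L}$, and the natural map $G \to \Aut(H_1(H_{g,1}^+))$ of the statement is exactly $\rho$ followed by this isomorphism, so the onto hypothesis says precisely $\rho(G) = \mathcal{L}$.

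The backward implication is a direct manipulation. Given $\eta^- \eta_G \xi_1 \eta_G^{-1} \eta^+ = \xi_2$ with $\eta_G \in G$, $\eta^+ \in \mathcal{IM}(H_{g,1}^+)$ and $\eta^- \in \mathcal{IM}(H_{g,1}^-)$, I would absorb $\eta_G \in \mathcal{M}(H_{g,1}^-)$ into $\eta^-$ and $\eta_G^{-1} \in \mathcal{M}(H_{g,1}^+)$ into $\eta^+$: this yields $\xi_2 = (\eta^- \eta_G)\, \xi_1\, (\eta_G^{-1} \eta^+)$ with $\eta^- \eta_G \in \mathcal{M}(H_{g,1}^-)$ and $\eta_G^{-1} \eta^+ \in \mathcal{M}(H_{g,1}^+)$. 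As $\sim$ is symmetric (from $\xi_1 = \eta^- \xi_2 \eta^+$ one gets $\xi_2 = \gyaku{(\eta^-)}\xi_1 \gyaku{(\eta^+)}$, with the inverses again in the respective handle body groups), this gives $\xi_1 \sim \xi_2$.

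For the forward implication, by symmetry of $\sim$ I would start from $\xi_2 = \eta^- \xi_1 \eta^+$ with $\eta^\pm \in \mathcal{M}(H_{g,1}^\pm)$. Applying $\rho$ and using $\xi_1, \xi_2 \in \ker \rho$ gives $\rho(\eta^-)\rho(\eta^+) = \id$, so $\rho(\eta^-) = \gyaku{\rho(\eta^+)}$. Now $\rho(\eta^+)$ preserves $L^+$ and $\rho(\eta^-) = \gyaku{\rho(\eta^+)}$ preserves $L^-$, so $\rho(\eta^+)$ preserves both, i.e. $\rho(\eta^+) \in \mathcal{L}$. Using $\rho(G) = \mathcal{L}$ I would pick $\eta_G \in G$ with $\rho(\eta_G) = \gyaku{\rho(\eta^+)}$. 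Setting $\tilde\eta^+ \defeq \eta_G \eta^+$ and $\tilde\eta^- \defeq \eta^- \eta_G^{-1}$, one checks $\tilde\eta^+ \in \mathcal{M}(H_{g,1}^+)$ with $\rho(\tilde\eta^+) = \rho(\eta_G)\rho(\eta^+) = \id$, and $\tilde\eta^- \in \mathcal{M}(H_{g,1}^-)$ with $\rho(\tilde\eta^-) = \rho(\eta^-)\gyaku{\rho(\eta_G)} = \gyaku{\rho(\eta^+)}\rho(\eta^+) = \id$, so $\tilde\eta^\pm \in \mathcal{IM}(H_{g,1}^\pm)$. Finally inserting $\eta_G^{-1}\eta_G$ on either side of $\xi_1$ rewrites $\xi_2 = \eta^- \xi_1 \eta^+$ as $\xi_2 = \tilde\eta^- \eta_G \xi_1 \eta_G^{-1} \tilde\eta^+$, which is the asserted factorization.

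The only genuinely delicate point, and where I expect the main difficulty, is the upgrade carried out in the first paragraph: the hypothesis only provides surjectivity onto the quotient $\Aut(H_1(H_g^+)) \cong GL(g,\Z)$, whereas the forward direction needs $\gyaku{\rho(\eta^+)}$ realized \emph{exactly} inside $\Aut(H_1(\Sigma_{g,1}))$. This is precisely why $G$ must lie in both handle body groups: that constraint forces $\rho(G) \subseteq \mathcal{L}$, on which the quotient map to $\Aut(H_1(H_g^+))$ is an isomorphism, so surjectivity onto the quotient upgrades to $\rho(G) = \mathcal{L}$. Justifying the isomorphism $\mathcal{L} \cong \Aut(H_1(H_g^+))$ and the complementarity $H_1(\Sigma_{g,1}) = L^+ \oplus L^-$ are the facts I would need to set up with care.
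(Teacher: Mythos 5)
Your argument is correct and is in substance the same as the paper's: the paper merely cites Pitsch's Lemma 4, observing that the proof of the case $G=\mathcal{M}(H_{g,1}^+)\cap\mathcal{M}(H_{g,1}^-)$ uses only the surjectivity of $G\to\Aut(H_1(H_{g,1}^+))$, and your write-up reconstructs that proof in full. In particular the step you flag as delicate --- that the simultaneous stabilizer $\mathcal{L}$ of the complementary Lagrangians $L^{\pm}$ inside the symplectic group maps isomorphically onto $\Aut(H_1(H_g^+))$, so surjectivity onto the quotient upgrades to $\rho(G)=\mathcal{L}$ --- is precisely the content the paper's citation is implicitly relying on.
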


\begin{proof}
Pitsch proved the above claim in the case
$G=\mathcal{M}(H^{+}_{g,1}) \cap
\mathcal{M}(H^{-}_{g,1}) $.
The proof is based on the fact
that the natural map $\mathcal{M}(H^{+}_{g,1}) \cap
\mathcal{M}(H^{-}_{g,1}) \to \Aut (H_1 (H^+_{g,1}))$ is  onto.
Therefore, the proof of \cite{Pitsch2009} Lemma 4 works for this lemma.
\end{proof}

We construct a subgroup of $\mathcal{M}(H^{+}_{g,1}) \cap
\mathcal{M}(H^{-}_{g,1}) $ satisfying the above condition.
Let $G \subset \mathcal{M}(H_{g,1}^+)
\cap \mathcal{M}(H_{g,1}^-)$  be the subgroup generated by
\begin{equation*}
\shuugou{h_i|i \in \shuugou{1,2, \cdots, g}} \cup
\shuugou{s_{ij}|i \neq j}
\end{equation*}
where we denote by $h_i$ and $s_{ij}$
the half twist along $c_{h,i}$ as in Figure \ref{figure_handle_G_h_i}
and the element $t_{c_{i,j}}{t_{c_{a,i}}}^{-1}{t_{c_{b,j}}}^{-1}$
as in Figure \ref{figure_handle_G_S_j_i}
and Figure \ref{figure_handle_G_S_i_j}.
Since this subgroup  $G$ satisfies the condition in the above lemma,
we have the following.

\begin{figure}
\begin{picture}(300,140)
\put(0,0){\includegraphics[width=300pt]{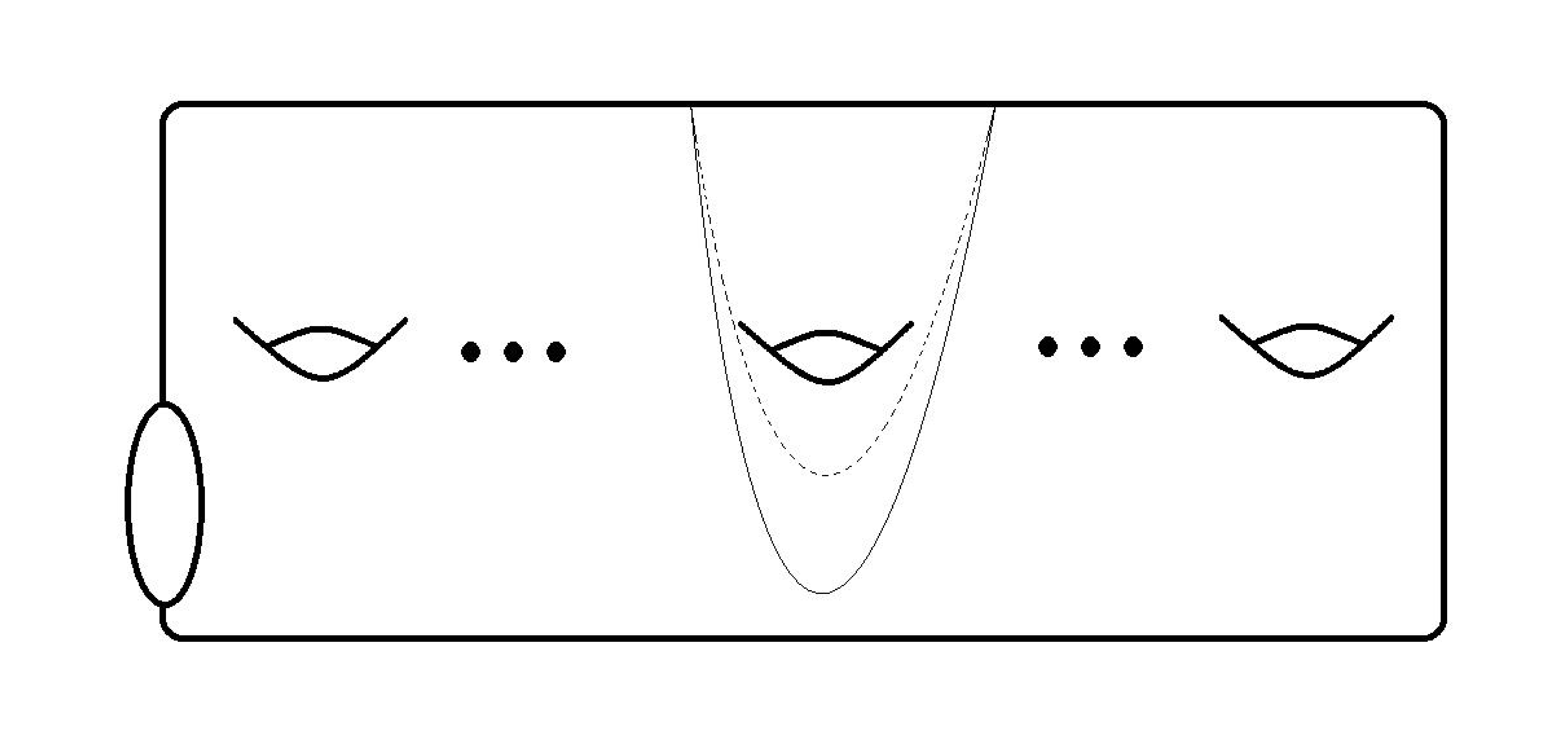}}
\put(37,90){$H_g^+$}
\put(57,90){$g$}
\put(250,90){$1$}
\put(160,90){$i$}
\put(180,63){$c_{h,i}$}
\put(14,90){$H_g^-$}
\end{picture}
\caption{$c_{h,i}$}
\label{figure_handle_G_h_i}
\end{figure}

\begin{figure}
\begin{picture}(300,140)
\put(0,0){\includegraphics[width=300pt]{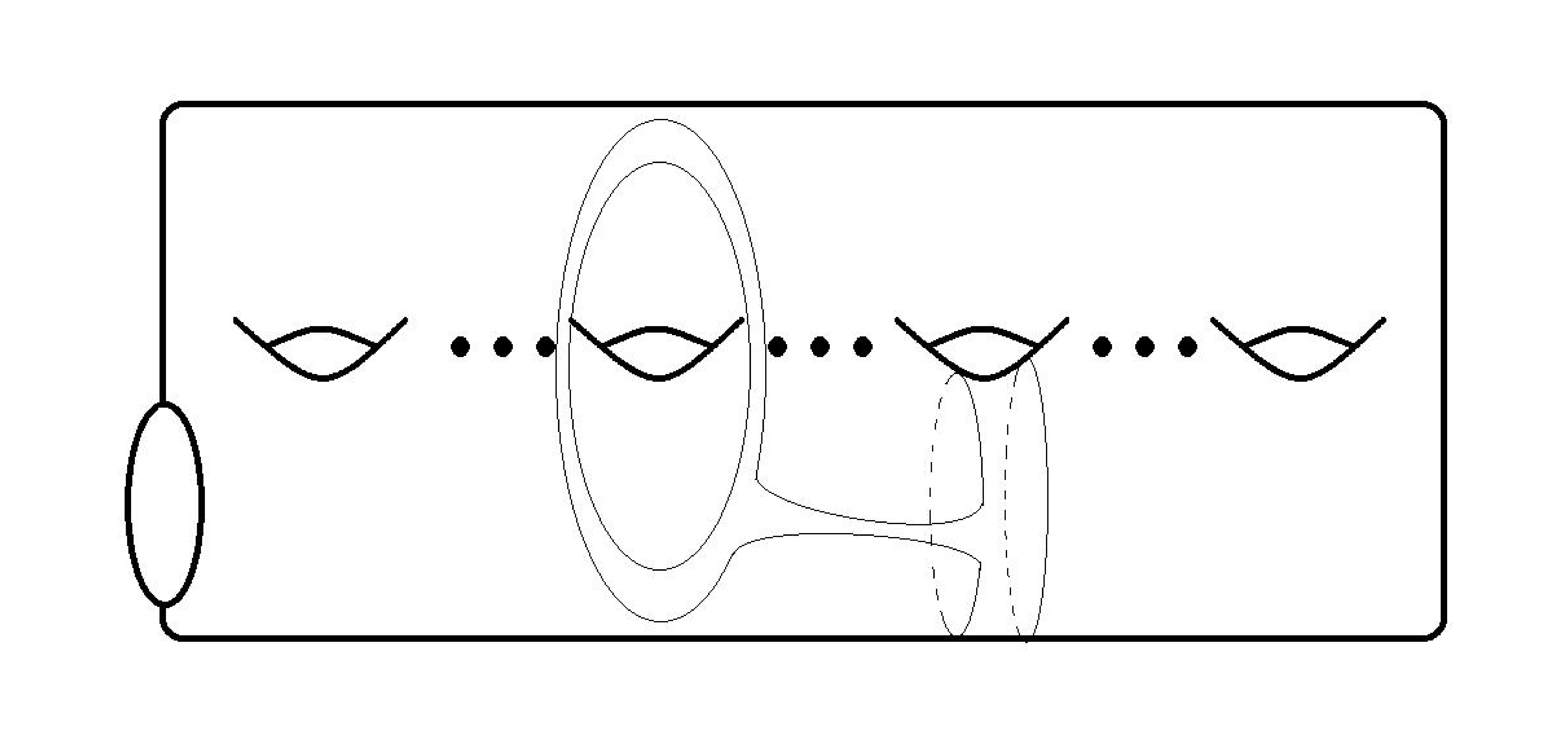}}
\put(37,90){$H_g^+$}
\put(57,90){$g$}
\put(250,90){$1$}
\put(204,43){$c_{a,i}$}
\put(160,27){$c_{i,j}$}
\put(120,45){$c_{b,j}$}
\put(187,90){$i$}
\put(125,90){$j$}
\put(14,90){$H_g^-$}
\end{picture}
\caption{$c_{a,i}$, $c_{b,j}$ and $c_{i,j}$ for $j>i$}
\label{figure_handle_G_S_j_i}
\end{figure}

\begin{figure}
\begin{picture}(300,140)
\put(0,0){\includegraphics[width=300pt]{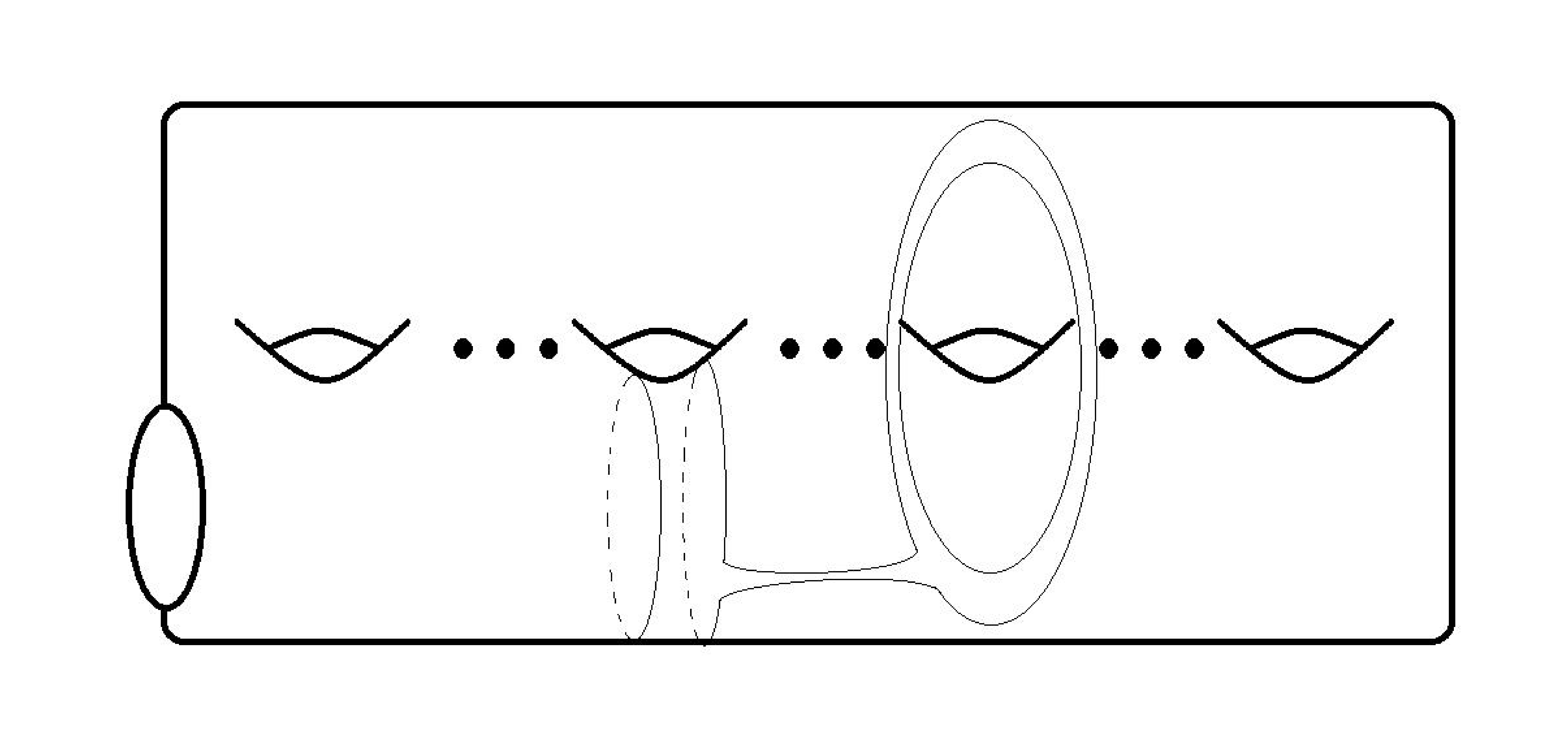}}
\put(37,90){$H_g^+$}
\put(57,90){$g$}
\put(250,90){$1$}
\put(213,63){$c_{i,j}$}
\put(180,53){$c_{b,j}$}
\put(100,43){$c_{a,i}$}
\put(187,90){$j$}
\put(125,90){$i$}
\put(14,90){$H_g^-$}
\end{picture}
\caption{$c_{a,i}$, $c_{b,j}$ and $c_{i,j}$ for $i>j$}
\label{figure_handle_G_S_i_j}
\end{figure}

\begin{lemm}
\label{lemm_torelli_check}
The equivalence relation $\sim$ in $\mathcal{I}(\Sigma_{g,1})$ is generated by
$\xi \sim \eta_{G} \xi {\eta_{G}}^{-1}$ for $\eta_G \in \shuugou{h_i, s_{i,j}}$,
$\xi \sim \xi \eta^+$ for $\eta^+ \in 
\shuugou{t_{\xi(c_a) \xi( c'_a)}|\xi \in \mathcal{M} (H_{g,1}^+)}
\cup \shuugou{t_{\xi(c_b) \xi(c'_b)}|\xi \in \mathcal{M} (H_{g,1}^+)}$
and 
$\xi \sim \eta^- \xi$ for $\eta^- \in 
\shuugou{t_{\xi(c_a) \xi( c'_a)}|\xi \in \mathcal{M} (H_{g,1}^-)}
\cup \shuugou{t_{\xi(c_b) \xi(c'_b)}|\xi \in \mathcal{M} (H_{g,1}^-)}$
\end{lemm}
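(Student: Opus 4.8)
The plan is to deduce this from the preceding lemma (\cite{Pitsch2009}, Lemma 4) applied to our specific subgroup $G$, combined with Lemma \ref{lemm_torelli_handle_generator} and the fact that $G$ is generated by $\shuugou{h_i, s_{i,j}}$. Write $\approx$ for the equivalence relation on $\mathcal{I}(\Sigma_{g,1})$ generated by the three families of moves in the statement. I would prove $\approx \,=\, \sim$ by establishing the two inclusions separately.

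For $\approx \,\subseteq\, \sim$, I would check that each generating move is an instance of $\sim$ and keeps us inside $\mathcal{I}(\Sigma_{g,1})$. A conjugation $\xi \mapsto \eta_G \xi \eta_G^{-1}$ with $\eta_G \in G \subset \mathcal{M}(H_{g,1}^+) \cap \mathcal{M}(H_{g,1}^-)$ is realized in the definition of $\sim$ by taking $\eta^- = \eta_G \in \mathcal{M}(H_{g,1}^-)$ and $\eta^+ = \eta_G^{-1} \in \mathcal{M}(H_{g,1}^+)$; moreover its action on $H_1(\Sigma_{g,1})$ is trivial because the contributions of $\eta_G$ and $\eta_G^{-1}$ cancel, so the image again lies in $\mathcal{I}(\Sigma_{g,1})$. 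Right multiplication by $\eta^+ \in \mathcal{IM}(H_{g,1}^+)$ and left multiplication by $\eta^- \in \mathcal{IM}(H_{g,1}^-)$ are directly of the form appearing in the definition of $\sim$ and preserve the Torelli condition, since $\eta^\pm$ act trivially on homology. Hence every move is contained in $\sim$, and therefore so is the equivalence relation they generate.

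For $\sim \,\subseteq\, \approx$, suppose $\xi_1 \sim \xi_2$. The preceding lemma, whose hypothesis holds for our $G$, provides $\eta_G \in G$, $\eta^+ \in \mathcal{IM}(H_{g,1}^+)$ and $\eta^- \in \mathcal{IM}(H_{g,1}^-)$ with $\eta^- \eta_G \xi_1 \eta_G^{-1} \eta^+ = \xi_2$. I would realize this single identity as a finite chain of elementary moves, in the order
\begin{equation*}
\xi_1 \ \longmapsto \ \eta_G \xi_1 \eta_G^{-1} \ \longmapsto \ \eta_G \xi_1 \eta_G^{-1} \eta^+ \ \longmapsto \ \eta^- \eta_G \xi_1 \eta_G^{-1} \eta^+ = \xi_2 ,
\end{equation*}
expanding each factor into generators. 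Writing $\eta_G$ as a word in $\shuugou{h_i, s_{i,j}}$ and their inverses, conjugation by $\eta_G$ becomes an iterated conjugation by the generators, each step being one of the allowed moves; conjugation by an inverse generator is available because $\approx$ is symmetric. Writing $\eta^+$ and $\eta^-$ as words in the generators furnished by Lemma \ref{lemm_torelli_handle_generator}, the right and left multiplications likewise split into successive multiplications by single generators. This yields $\xi_1 \approx \xi_2$ and hence the remaining inclusion.

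The main point requiring care is that every intermediate term appearing in the chain above lies in $\mathcal{I}(\Sigma_{g,1})$, so that each step is legitimately a move between Torelli elements: the partial conjugations stay Torelli because conjugation by elements of $G$ preserves the trivial action on $H_1$, and the partial one-sided products stay Torelli because they are products of a Torelli element with the Torelli generators of $\mathcal{IM}(H_{g,1}^\pm)$. Once this bookkeeping is in place, the two inclusions give $\sim \,=\, \approx$, which is the assertion of the lemma.
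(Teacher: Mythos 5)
Your proposal is correct and follows essentially the same route the paper intends: the paper leaves the proof implicit in the sentence ``Since this subgroup $G$ satisfies the condition in the above lemma, we have the following,'' i.e.\ one applies the adapted Pitsch Lemma 4 to this $G$ and then expands $\eta_G$, $\eta^+$, $\eta^-$ into the generators $\shuugou{h_i, s_{i,j}}$ and the generating sets of $\mathcal{IM}(H_{g,1}^\pm)$ from Lemma \ref{lemm_torelli_handle_generator}. Your write-up simply makes this deduction, including the converse inclusion and the Torelli bookkeeping, explicit.
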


\section{Proof of main theorem}
\subsection{Completed Kauffman bracket skein algebras and
Torelli groups} 
Let $\Sigma$ be a compact connected oriented surface.
We denote by $\mathcal{T}(\Sigma)$ the set of unoriented framed tangles in
$\Sigma \times I$.
Let $\skein{\Sigma}$ be the quotient of $\Q [A.\gyaku{A}] \mathcal{T}(\Sigma)$ 
by the skein relation and the trivial knot relation as in Figure 
\ref{figure_def_skein}.  We consider the product of $\skein{\Sigma}$ as in Figure
\ref{figure_def_product} and the Lie bracket $[x,y] \defeq
\frac{1}{-A+\gyaku{A}} (xy-yx)$ for $x,y \in \skein{\Sigma}$.
The completed Kauffman bracket skein algebra 
is defined by
\begin{equation*}
\widehat{\skein{\Sigma}} \defeq \comp{i}{\skein{\Sigma}/(\ker \varepsilon)^i}
\end{equation*}
where the augmentation map $ \varepsilon:\skein{\Sigma} \to \Q$ is defined by
$A+1 \mapsto 0$ and $[L]- (-2)^{\zettaiti{L}}\mapsto 0$ for $L \in \mathcal{T}(\Sigma)$.
In \cite{Tsujipurebraid}, we define the filtration
$\filtn{F^n \widehat{\skein{\Sigma}}}$ satisfying
\begin{align*}
&F^n \widehat{\skein{\Sigma}} F^m \widehat{\skein{\Sigma}} \subset
F^{n+m} \widehat{\skein{\Sigma}}, \\
&[F^n \widehat{\skein{\Sigma}}, F^m \widehat{\skein{\Sigma}}] \subset
F^{n+m-2} \widehat{\skein{\Sigma}}, \\
&F^{2n} \widehat{\skein{\Sigma}} = (\ker \varepsilon)^n.
\end{align*}
By the second equation, we can consider the Baker Campbell Hausdorff
series
\begin{equation*}
\bch(x,y) \defeq (-A+\gyaku{A}) \log ( \exp (\frac{x}{-A+\gyaku{A}})
\exp(\frac{y}{-A+\gyaku{A}}))
\end{equation*}
on $F^3 \widehat{\skein{\Sigma}}$.
As elements of the associated Lie algebra
$(\widehat{\skein{\Sigma}},[ \ \ , \ \ ])$,
$\bch$ has a usual expression.
For example, 
\begin{equation*}
\bch(x,y) = x+y+\frac{1}{2}[x,y]+\frac{1}{12}([x,[x,y]]+[y,[y,x]])+ \cdots.
\end{equation*}

Furthermore, we have the following.

\begin{prop}[\cite{Tsujipurebraid} Corollary 5.7.]
\label{prop_embed_filtration}
For any embedding $i : \Sigma \times I \to S^3$ inducing $i_*
: \widehat{\skein{\Sigma}} \to \Q [[A+1]]$,
we have $i_* (F^n \widehat{\skein{\Sigma}}) \subset  ((A+1)^{\gauss{\frac{n+1}{2}}})$,
where $\gauss{x}$ is the greatest integer not greater than
$x$ for $x \in \Q$. 
\end{prop}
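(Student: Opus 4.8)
The plan is to regard $i_*$ as a morphism of filtered algebras and to track how it converts the filtration $\filtn{F^n \widehat{\skein{\Sigma}}}$ into the $(A+1)$-adic filtration of the target. The two facts I will exploit are that the target $\Q[[A+1]]$ is \emph{commutative} and that $i_*$ is a $\Q[[A+1]]$-algebra homomorphism respecting augmentations. Writing $\hbar := -A+\gyaku{A}$, note $\hbar = -2(A+1)+O((A+1)^2)$, so $\hbar$ generates the ideal $((A+1))$ and is a non-zero-divisor in $\Q[[A+1]]$. Since $\gauss{\frac{n+1}{2}}=\lceil n/2\rceil$ and $\lceil\,\cdot\,\rceil$ is superadditive, once the estimate is known on a multiplicative generating family it propagates through the product property $F^n\widehat{\skein{\Sigma}}\,F^m\widehat{\skein{\Sigma}}\subset F^{n+m}\widehat{\skein{\Sigma}}$; thus it suffices to treat the even layers $F^{2n}\widehat{\skein{\Sigma}}=(\ker\varepsilon)^n$ and the odd layers $F^{2n+1}\widehat{\skein{\Sigma}}$ separately, and then pass to the inverse limit.

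I first dispose of the even layers, where the claim reads $i_*((\ker\varepsilon)^n)\subset((A+1)^n)$. By multiplicativity of both filtrations this reduces to $n=1$, i.e. to $i_*(\ker\varepsilon)\subset((A+1))$, which says that the composite of $i_*$ with the evaluation $A\mapsto -1$ on $\Q[[A+1]]$ recovers $\varepsilon$. Concretely I must check that for a framed link $L\subset\Sigma\times I$ with $c$ components one has $i_*([L])\equiv(-2)^c\pmod{(A+1)}$. This follows from the crossing-change identity $\langle L_+\rangle-\langle L_-\rangle=(A-\gyaku{A})(\langle L_0\rangle-\langle L_\infty\rangle)$ together with $(A-\gyaku{A})\big|_{A=-1}=0$: modulo $(A+1)$ the Kauffman bracket is invariant under crossing changes and framing changes, so it agrees with the bracket of the $c$-component unlink, namely $(-A^2-A^{-2})^c\equiv(-2)^c$. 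Conceptually this is the specialization of the skein module of $S^3$ to the coordinate ring of the one-point character variety of $\pi_1(S^3)$.

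The heart of the argument, and the step I expect to be the real obstacle, is the odd layers, where $F^{2m+1}\widehat{\skein{\Sigma}}\subset F^{2m}\widehat{\skein{\Sigma}}=(\ker\varepsilon)^m$ only yields $i_*(F^{2m+1}\widehat{\skein{\Sigma}})\subset((A+1)^m)$, one power short of the required $((A+1)^{m+1})$. The extra power comes from commutativity of the target. Indeed, the identity $xy-yx=\hbar\,[x,y]$ in $\widehat{\skein{\Sigma}}$, pushed through the homomorphism $i_*$ into the commutative ring $\Q[[A+1]]$, gives $0=i_*(x)i_*(y)-i_*(y)i_*(x)=\hbar\,i_*([x,y])$; since $\hbar$ is a non-zero-divisor, $i_*([x,y])=0$ for all $x,y$. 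Hence $i_*$ annihilates the closed two-sided ideal generated by all Lie brackets, i.e. it factors through the abelianization of $\widehat{\skein{\Sigma}}$. The point is then that the odd layer dies in this abelianization: by the construction of the filtration in \cite{Tsujipurebraid} one has $F^{2m+1}\widehat{\skein{\Sigma}}\subset(\ker\varepsilon)^{m+1}+\overline{[\widehat{\skein{\Sigma}},\widehat{\skein{\Sigma}}]}$, whence $i_*(F^{2m+1}\widehat{\skein{\Sigma}})\subset i_*((\ker\varepsilon)^{m+1})+0\subset((A+1)^{m+1})$. Verifying this last inclusion is exactly where the explicit definition of the filtration must be invoked, rather than the three displayed inclusions alone; concretely it rests on the skein-theoretic fact that $xy-yx$ is divisible by $\hbar$, so that the odd layers are genuinely ``Lie'' and collapse onto the even ones after abelianizing.

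Finally I would assemble the two cases. Combining $i_*(F^{2n}\widehat{\skein{\Sigma}})\subset((A+1)^n)$ with $i_*(F^{2n+1}\widehat{\skein{\Sigma}})\subset((A+1)^{n+1})$ gives $i_*(F^n\widehat{\skein{\Sigma}})\subset((A+1)^{\gauss{\frac{n+1}{2}}})$ at the level of each finite quotient $\skein{\Sigma}/(\ker\varepsilon)^N$; since the ideals $((A+1)^k)$ are $(A+1)$-adically closed and the layers $F^n\widehat{\skein{\Sigma}}$ are obtained as inverse limits, the bound persists on the completion $\widehat{\skein{\Sigma}}$, which completes the proof.
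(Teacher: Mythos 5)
The paper does not actually prove this proposition; it is imported verbatim from \cite{Tsujipurebraid}, Corollary 5.7, so your attempt can only be judged on its own terms. Judged that way, it fails at its foundational premise: you treat $i_*$ as a $\Q[[A+1]]$-\emph{algebra} homomorphism into a commutative ring, and both halves of your argument rest on this --- the even layers via the reduction $i_*((\ker\varepsilon)^n)\subset (i_*(\ker\varepsilon))^n\subset((A+1)^n)$, and the odd layers via $(-A+\gyaku{A})\,i_*([x,y])=i_*(x)i_*(y)-i_*(y)i_*(x)=0$. But $i_*$ is only a $\Q[[A+1]]$-\emph{module} homomorphism (the paper says exactly this when introducing $e_*$). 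The product on $\skein{\Sigma}$ is stacking in $\Sigma\times I$, and after embedding in $S^3$ the links ``$L_1$ over $L_2$'' and ``$L_2$ over $L_1$'' are in general non-isotopic links with different brackets, so $i_*(xy)\neq i_*(x)i_*(y)$ and $i_*(xy)\neq i_*(yx)$. Concretely, for the standard embedding of $\Sigma_{1,1}$ in $S^3$ and the two generators $\alpha,\beta$ meeting in one point, resolving the single crossing gives $[\alpha,\beta]=[m_-]-[m_+]$, where $m_{\pm}$ are the two smoothings; these are unknots in $S^3$ carrying surface framings $\pm1$, so $e_*([\alpha,\beta])=\pm(A^{3}-A^{-3})(A^{2}+A^{-2})\neq 0$. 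Thus $i_*$ does not kill commutators and does not factor through any abelianization.

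With multiplicativity gone, your even-layer step collapses (the inclusion $i_*((\ker\varepsilon)^n)\subset((A+1)^n)$ for $n\geq 2$ is itself the nontrivial skein-theoretic content of the cited corollary, and in \cite{Tsujipurebraid} it is obtained by working with explicit generators of the filtration and crossing-change inductions, not by formal functoriality), and your odd-layer mechanism --- gaining the extra power of $(A+1)$ because ``the odd part is Lie and dies in the abelianization'' --- is simply unavailable. The auxiliary inclusion $F^{2m+1}\widehat{\skein{\Sigma}}\subset(\ker\varepsilon)^{m+1}+\overline{[\widehat{\skein{\Sigma}},\widehat{\skein{\Sigma}}]}$, which you already flag as unverified, is therefore moot: even if it held, $i_*$ of the commutator ideal is not zero, so no extra divisibility follows. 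The only salvageable piece is the $n=1$ computation that the Kauffman bracket of a link in $S^3$ is congruent to $(-2)^{|L|}$ modulo $(A+1)$; everything beyond that must come from the explicit definition of $\filtn{F^n\widehat{\skein{\Sigma}}}$ in \cite{Tsujipurebraid}, which neither this paper nor your argument reproduces.
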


\begin{figure}
\begin{picture}(300,200)
\put(0,0){\includegraphics[width=400pt]{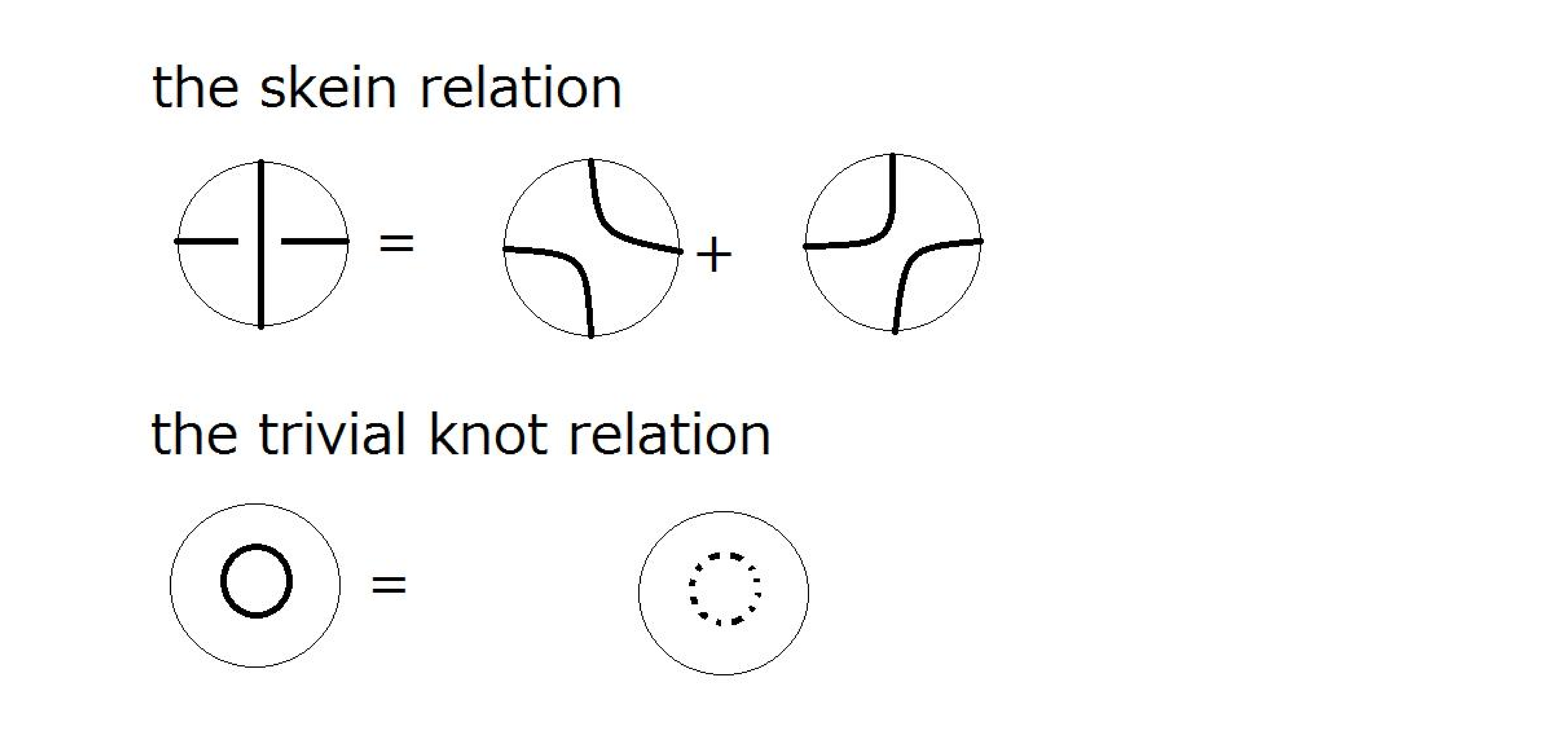}}
\put(110,120){$A$}
\put(187,120){$\gyaku{A}$}
\put(105,35){$(-A^2-A^{-2})$}
\end{picture}
\caption{Definition of Kauffman bracket skein module}
\label{figure_def_skein}
\end{figure}

\begin{figure}
\begin{picture}(140,70)
\put(0,0){\includegraphics[width=140pt]{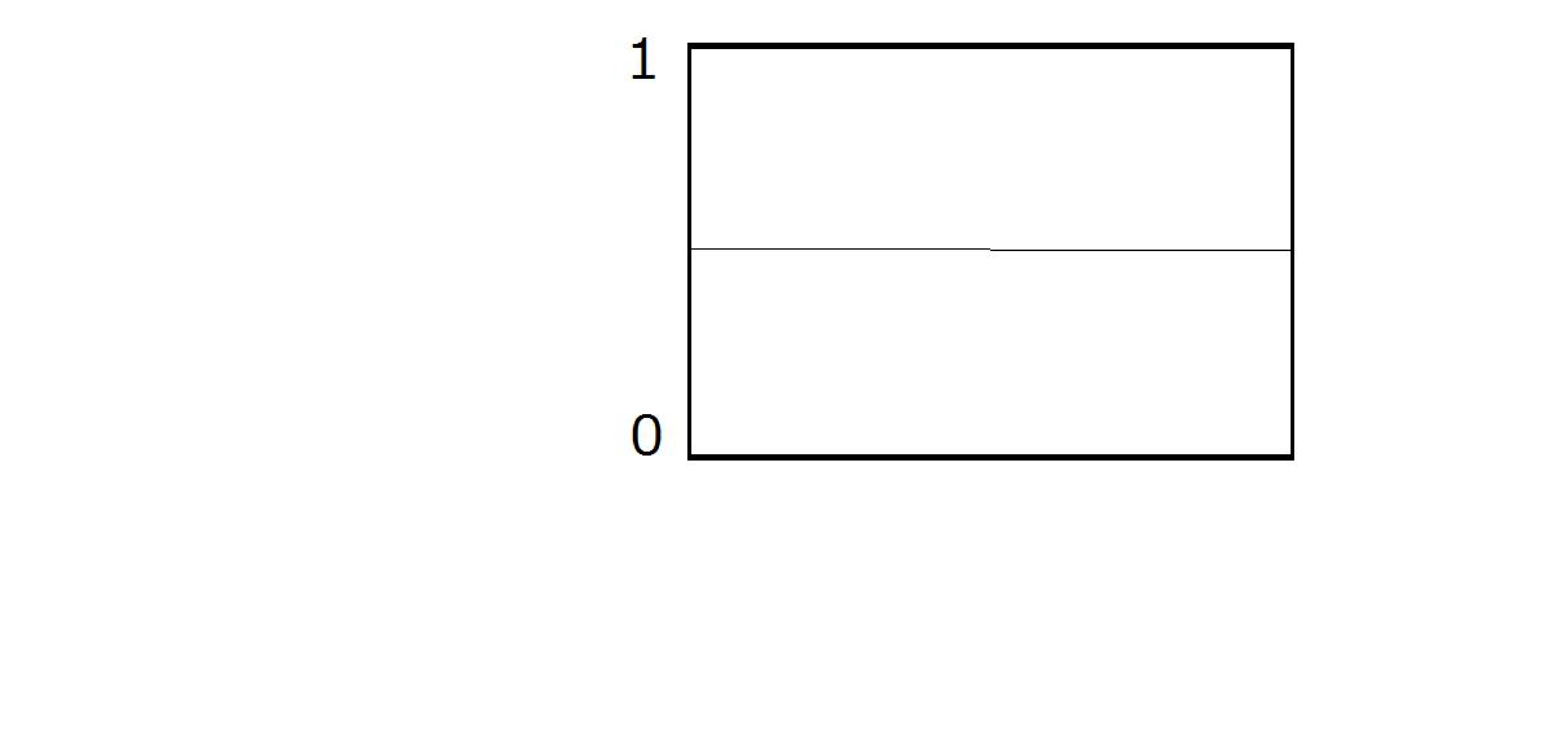}}
\put(0,40){$xy \defeq$}
\put(80,50){$x$}
\put(80,30){$y$}
\put(80,13){$\Sigma$}
\put(40,0){for $x, y \in \skein{\Sigma}$}
\end{picture}
\caption{Definition of the product}
\label{figure_def_product}
\end{figure}

In our previous papers \cite{TsujiCSAI} \cite{Tsujipurebraid} \cite{TsujiTorelli},
we study a relationship between the Kauffman bracket skein algebra and
the mapping class group on a surface $\Sigma$.
Let $\widehat{\skein{\Sigma}}$ be the completed Kauffman 
bracket skein algebra on $\Sigma$ and
$\widehat{\skein{\Sigma,J}}$  the completed Kauffman 
bracket skein module with base point set $J \times 
\shuugou{\frac{1}{2}}$ for a finite subset $J \subset 
\partial \Sigma$.
In \cite{TsujiCSAI}, we prove the formula of the Dehn twist $t_c$ of a simple closed
curve $c$
\begin{equation*}
t_c ( \cdot) = \exp(\sigma(L(c)))(\cdot) \defeq
\sum_{i=0}^\infty \frac{1}{i!}(\sigma(L(c))^i(\cdot) \in
\Aut (\widehat{\skein{\Sigma,J}})
\end{equation*}
where 
\begin{equation*}
L(c) \defeq \frac{-A+\gyaku{A}}{4 \log(-A)}
(\arccosh (\frac{-c}{2}))^2-(-A+\gyaku{A})
\log(-A).
\end{equation*}
We obtain the formula by analogy of
the formula of the 
completed Goldman Lie algebra
\cite{Kawazumi} \cite{KK} \cite{MT}.
We define the filtration 
$\filtn{F^n \widehat{\skein{\Sigma}}}$
in \cite{Tsujipurebraid}.
We consider $F^3 \widehat{\skein{\Sigma}}$ as 
a group using the Baker Campbell Hausdorff series $\bch$
where $g>1$.
We remark that $\mathcal{I}(\Sigma_{g,1})$ is generated by
$\shuugou{t_{c_1c_2} \defeq t_{c_1}{t_{c_2}}^{-1}|(c_1,c_2) :\mathrm{BP}}$
where a BP (bounding pair) is a pair of two simple closed curves bounding
a submanifold of $\Sigma_{g,1}$.
By analogy of  \cite{KK} 6.3, we have the following.

\begin{thm}[\cite{TsujiTorelli} Theorem 3.13. Corollary 3.14.]
\label{thm_zeta}
The group homomorphism $\zeta :\mathcal{I}(\Sigma_{g,1}) \to
(F^3 \widehat{\skein{\Sigma_{g,1}}}, \bch)$ defined by
$\zeta (t_{c_1c_2}) =L(c_1)-L(c_2)$ for a
BP $(c_1, c_2)$ is injective whre $g>1$.
Furthermore, we have
\begin{equation*}
\xi (\cdot) =\exp (\sigma (\zeta(\xi))(\cdot) \in \Aut(\widehat{\skein{\Sigma_{g,1},J}})
\end{equation*}
for any $\xi \in \mathcal{I}(\Sigma_{g,1})$ and any finite subset $J \subset 
\partial \Sigma_{g,1}$.
\end{thm}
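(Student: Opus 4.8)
The plan is to build $\zeta$ out of the single Dehn twist formula of \cite{TsujiCSAI} and to deduce all three assertions (well-definedness as a homomorphism, the exponential formula, and injectivity) by comparing two ambient homomorphisms into $\Aut(\widehat{\skein{\Sigma_{g,1},J}})$. The first is the action $\rho :\mathcal{I}(\Sigma_{g,1}) \to \Aut(\widehat{\skein{\Sigma_{g,1},J}})$, $\xi \mapsto \xi(\cdot)$; the second is $E \defeq \exp \circ \sigma$ on $(F^3 \widehat{\skein{\Sigma_{g,1}}}, \bch)$. I would first check that $E$ is a group homomorphism: since $\sigma$ is a Lie action by continuous derivations and the bracket lowers filtration by $2$, for $x \in F^3 \widehat{\skein{\Sigma_{g,1}}}$ the derivation $\sigma(x)$ strictly raises filtration, so $\exp(\sigma(x))$ converges, and the identity $\exp(\sigma(x))\exp(\sigma(y)) = \exp(\sigma(\bch(x,y)))$ holds on $F^3 \widehat{\skein{\Sigma_{g,1}}}$ by the defining property of $\bch$.

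Next I would verify the formula on bounding pair maps. For a BP $(c_1,c_2)$ the curves may be taken disjoint, so the skein elements $c_1$ and $c_2$, hence all of $L(c_1)$ and $L(c_2)$, commute; therefore $[L(c_1),-L(c_2)]=0$ and $\bch(L(c_1),-L(c_2)) = L(c_1)-L(c_2)$. Combining the Dehn twist formula with the homomorphism property of $E$ gives
\begin{equation*}
\rho(t_{c_1c_2}) = \exp(\sigma(L(c_1)))\exp(\sigma(-L(c_2))) = \exp(\sigma(L(c_1)-L(c_2))).
\end{equation*}
The key filtration point here is that $L(c_1)-L(c_2) \in F^3 \widehat{\skein{\Sigma_{g,1}}}$: although each $L(c_i)$ only lies in $F^2 \widehat{\skein{\Sigma_{g,1}}} = \ker \varepsilon$, the degree-two part of $L(c_i)$ in the associated graded depends only on the homology class of $c_i$, and $[c_1]=[c_2]$ for a bounding pair, so the leading terms cancel. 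This simultaneously places $L(c_1)-L(c_2)$ in the domain of $E$ and shows $\rho(t_{c_1c_2}) = E(L(c_1)-L(c_2))$.

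Finally I would assemble the homomorphism and its injectivity. Since the BP maps generate $\mathcal{I}(\Sigma_{g,1})$ and each lands in the subgroup $E(F^3 \widehat{\skein{\Sigma_{g,1}}})$, the entire image $\rho(\mathcal{I}(\Sigma_{g,1}))$ lies in $E(F^3 \widehat{\skein{\Sigma_{g,1}}})$; using that $E$ is injective on $F^3 \widehat{\skein{\Sigma_{g,1}}}$ (if $\exp(\sigma(x))=\id$ then $\sigma(x)=0$, forcing $x$ central, which for $g>1$ and $x\in F^3 \widehat{\skein{\Sigma_{g,1}}}$ gives $x=0$), the rule $\zeta \defeq E^{-1}\circ \rho$ is a well-defined group homomorphism sending $t_{c_1c_2}$ to $L(c_1)-L(c_2)$, and $\rho = E\circ \zeta$ is exactly the asserted exponential formula, valid on all of $\mathcal{I}(\Sigma_{g,1})$ by multiplicativity. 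Injectivity of $\zeta$ is then immediate: if $\zeta(\xi)=\zeta(\xi')$ then $\rho(\xi)=\rho(\xi')$, and for $g>1$ the Torelli group acts faithfully on $\widehat{\skein{\Sigma_{g,1},J}}$. The main obstacle I expect is precisely this faithfulness together with the injectivity of $E$: controlling the center of the completed skein algebra acting trivially on the based module, and reducing faithfulness of $\rho$ to the known faithful action of the mapping class group on $\pi_1(\Sigma_{g,1})$, realized inside the arc part of $\widehat{\skein{\Sigma_{g,1},J}}$.
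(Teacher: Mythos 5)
First, a remark on the comparison itself: the paper does not prove this statement — it is imported from \cite{TsujiTorelli} (Theorem 3.13 and Corollary 3.14) with only the comment that it is obtained by analogy with \cite{KK}. So your attempt can only be measured against that external source. Your architecture is the natural (and, as far as the cited source goes, essentially the correct) one: set $E=\exp\circ\sigma$, check that $E$ is a homomorphism on $(F^3\widehat{\skein{\Sigma_{g,1}}},\bch)$ using $[F^n,F^m]\subset F^{n+m-2}$, verify $\rho(t_{c_1c_2})=E(L(c_1)-L(c_2))$ on bounding pair maps via disjointness of $c_1,c_2$ and the cancellation of the $F^2$-leading terms of $L(c_1)$ and $L(c_2)$ (which depend only on the common homology class), and then set $\zeta=E^{-1}\circ\rho$, which packages well-definedness, the exponential formula, and injectivity into one construction.

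Nevertheless there is a genuine gap: the two facts you defer as ``the main obstacle'' are precisely the mathematical content of the theorem, and you assert them rather than prove them. (i) Injectivity of $E$ on $F^3\widehat{\skein{\Sigma_{g,1}}}$ requires that $\sigma(x)=0$ force $x=0$ for $x\in F^3\widehat{\skein{\Sigma_{g,1}}}$; since scalars and, more generally, elements acting centrally on $\widehat{\skein{\Sigma_{g,1},J}}$ do lie in $\ker\sigma$, one must actually compute that no such element survives in $F^3$, and no argument is given. Without this, $E^{-1}$ is not defined and neither is your $\zeta$. (ii) Your reduction of faithfulness of $\rho$ to ``the known faithful action of the mapping class group on $\pi_1(\Sigma_{g,1})$'' is insufficient as stated: $\widehat{\skein{\Sigma_{g,1},J}}$ is a completion with respect to the augmentation filtration, so an element acting trivially on it is only seen to act trivially on the pro-unipotent quotients of $\pi_1(\Sigma_{g,1})$ carried by the arc part. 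One needs in addition that $\pi_1(\Sigma_{g,1})$ is free, hence residually torsion-free nilpotent, so that an automorphism acting trivially on every nilpotent quotient is the identity — the same input that makes the Johnson filtration separate points. Until (i) and (ii) are carried out, both the existence of the homomorphism $\zeta$ and its injectivity remain unproven; the rest of your outline (the homomorphism property of $E$, the $\bch$ computation for disjoint curves, and the membership $L(c_1)-L(c_2)\in F^3\widehat{\skein{\Sigma_{g,1}}}$) is sound.
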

We remark that $\zeta (t_c) =L(c)$ for a separating simple closed curve $c$.

Let $e$ be an embedding $\Sigma_{g,1} \times [0,1]$ satisfying
the following conditions
\begin{align*}
&e_{|\Sigma_{g,1} \times \shuugou{\frac{1}{2}}}:\Sigma \times \shuugou{\frac{1}{2}}
\to \Sigma, (x,\frac{1}{2}) \mapsto x, \\
&e(\Sigma \times [0,\frac{1}{2}]) \subset H_g^+, \\
&e(\Sigma \times [\frac{1}{2},1]) \subset H_g^-. 
\end{align*}
We call this embedding a standard embedding.
We denote by $e_*$ the $\Q[[A+1]]$-module homomorphism
$\widehat{\skein{\Sigma_{g,1}}} \to \Q[[A+1]]$ induced by $e$.
The following is our main theorem.

\begin{thm}
\label{thm_main}
The map $Z: \mathcal{I}(\Sigma_{g,1}) \to \Q[[A+1]]$
defined by 
\begin{equation*}
Z(\xi) \defeq \sum_{i=0}^\infty \frac{1}{(-A+\gyaku{A})^i i!}e_*
((\zeta (\xi))^i)
\end{equation*}
induces
\begin{equation*}
z:\mathcal{H} (3) \to \Q[[A+1]], M(\xi) \to Z(\xi).
\end{equation*}

\end{thm}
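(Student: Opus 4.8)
The plan is to show that the map $Z$ descends along the bijection of Fact \ref{fact_map_3_manifold}, i.e.\ that $Z(\xi)$ depends only on the class of $\xi$ in $\varinjlim_g \mathcal{I}(\Sigma_{g,1})/\shim$. Writing $Z=e_*\circ \Phi$ with $\Phi(\xi)\defeq \exp(\zeta(\xi)/(-A+\gyaku{A}))$ (the series converges in $\widehat{\skein{\Sigma_{g,1}}}$ since $\zeta(\xi)\in F^3\widehat{\skein{\Sigma_{g,1}}}$, and $e_*(F^n)\subset ((A+1)^{\gauss{(n+1)/2}})$ by Proposition \ref{prop_embed_filtration}, so $Z(\xi)\in\Q[[A+1]]$), the first observation is that $\Phi$ is multiplicative: from $\zeta(\xi_1\xi_2)=\bch(\zeta(\xi_1),\zeta(\xi_2))$ (Theorem \ref{thm_zeta}) and the definition of $\bch$ one reads off $\Phi(\xi_1\xi_2)=\Phi(\xi_1)\Phi(\xi_2)$. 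By Lemma \ref{lemm_torelli_check} it then suffices to prove invariance of $Z$ under the three generating families of moves, together with compatibility with the stabilization $\mathcal{I}(\Sigma_{g,1})\hookrightarrow\mathcal{I}(\Sigma_{g+1,1})$; the latter is immediate because both the standard embedding and $\zeta$ are compatible with adding a handle, so I would dispose of it first and work at a fixed $g>1$ afterwards.

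For the conjugation moves $\xi\shim\eta_G\xi\gyaku{\eta_G}$ with $\eta_G\in G\subset\mathcal{M}(H_{g,1}^+)\cap\mathcal{M}(H_{g,1}^-)$, I would use naturality of $\zeta$: since $\eta_G t_{c_1c_2}\gyaku{\eta_G}=t_{\eta_G(c_1)\eta_G(c_2)}$ and $L(\eta_G(c))=(\eta_G)_*(L(c))$, we get $\zeta(\eta_G\xi\gyaku{\eta_G})=(\eta_G)_*(\zeta(\xi))$, where $(\eta_G)_*$ is the algebra automorphism of $\widehat{\skein{\Sigma_{g,1}}}$ induced by $\eta_G$. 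As $(\eta_G)_*$ commutes with $\exp$, this gives $Z(\eta_G\xi\gyaku{\eta_G})=e_*((\eta_G)_*\Phi(\xi))$. Because $\eta_G$ extends over both handlebodies, it extends to an orientation-preserving diffeomorphism of $S^3$, which acts trivially on $\skein{S^3}=\Q[[A+1]]$; hence $e_*\circ(\eta_G)_*=e_*$ and $Z(\eta_G\xi\gyaku{\eta_G})=Z(\xi)$.

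The substantial moves are the one-sided ones $\xi\shim\xi\eta^+$ and $\xi\shim\eta^-\xi$ with $\eta^\pm\in\mathcal{IM}(H_{g,1}^\pm)$. Here I would factor $e_*$ through the handlebodies. Let $\rho^\epsilon:\widehat{\skein{\Sigma_{g,1}}}\to \mathrm{End}(\widehat{\skein{H_g^\epsilon}})$ be the (multiplicative) action of the boundary skein algebra on the handlebody skein module by gluing a collar, let $\emptyset_\epsilon$ denote the empty skein in $H_g^\epsilon$, and set $\iota^\epsilon_*(w)=\rho^\epsilon(w)\emptyset_\epsilon$. Since $e$ sends $\Sigma_{g,1}\times[0,\tfrac12]$ into $H_g^+$ and $\Sigma_{g,1}\times[\tfrac12,1]$ into $H_g^-$, the standard embedding factorizes as $e_*(u\cdot v)=\kukakko{\iota^-_*(u),\rho^+(v)\emptyset_+}$ for the gluing pairing $\widehat{\skein{H_g^-}}\otimes\widehat{\skein{H_g^+}}\to\Q[[A+1]]$, with $v$ on the $H_g^+$ side. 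As $\rho^+$ is an algebra map it commutes with $\exp$, so $\rho^+(\Phi(\eta^+))$ is well defined and the whole problem reduces to the key absorption statement
\begin{equation*}
\rho^\epsilon(\Phi(\eta))\,\emptyset_\epsilon=\emptyset_\epsilon \qquad (\eta\in\mathcal{IM}(H_{g,1}^\epsilon)),
\end{equation*}
for then $e_*(u\cdot\Phi(\eta^+))=\kukakko{\iota^-_*(u),\rho^+(\Phi(\eta^+))\emptyset_+}=\kukakko{\iota^-_*(u),\emptyset_+}=e_*(u)$, and symmetrically on the $-$ side using $\rho^-$.

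To prove the absorption statement I would establish the handlebody analogue of Theorem \ref{thm_zeta}: for $\eta\in\mathcal{M}(H_{g,1}^\epsilon)\cap\mathcal{I}(\Sigma_{g,1})$ with extension $\widetilde\eta\in\Diff(H_g^\epsilon)$, the operator $\rho^\epsilon(\Phi(\eta))$ on $\widehat{\skein{H_g^\epsilon}}$ equals the diffeomorphism action $\widetilde\eta_*$; granting this, $\rho^\epsilon(\Phi(\eta))\emptyset_\epsilon=\widetilde\eta_*\emptyset_\epsilon=\emptyset_\epsilon$ because any diffeomorphism fixes the empty skein. This handlebody formula is where the real difficulty lies, and it is the step I expect to be the main obstacle: it does \emph{not} follow from the surface formula together with functoriality alone, since on $\widehat{\skein{\Sigma_{g,1}}}$ the automorphism induced by a Torelli element is conjugation by $\Phi(\eta)$ (the $J=\emptyset$ case of Theorem \ref{thm_zeta}), whose two factors cancel on the unit and yield only a tautology. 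One genuinely needs the one-sided module statement, which I would obtain by re-running the proof of the Dehn twist formula of \cite{TsujiCSAI} inside the module $\widehat{\skein{H_g^\epsilon}}$, exploiting that $\rho^\epsilon$ is an algebra map and that the generators of Lemma \ref{lemm_torelli_handle_generator} are built from curves bounding in $H_g^\epsilon$; the underlying computation is the vanishing $L(-A^2-\gyaku{A}^2)=0$, which makes a disk-bounding curve contribute trivially. Combining the three invariances with the multiplicativity of $\Phi$ and $\rho^\epsilon$ then yields the descent of $Z$ to $\mathcal{H}(3)$.
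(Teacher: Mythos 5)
Your proposal follows the same architecture as the paper: reduce via Fact \ref{fact_map_3_manifold} and Lemma \ref{lemm_torelli_check} to the conjugation moves and the two one\-/sided moves, use the $\bch$\-/multiplicativity of $\xi\mapsto\exp(\zeta(\xi)/(-A+\gyaku{A}))$ (this is exactly Proposition \ref{prop_bch_Z}), and kill the $\eta^{\pm}$ factor by showing it becomes trivial in the handlebody on its side of the Heegaard surface. Two points of comparison are worth recording. For the conjugation moves you give a uniform argument (any $\eta_G\in\mathcal{M}(H_{g,1}^+)\cap\mathcal{M}(H_{g,1}^-)$ extends to an orientation\-/preserving diffeomorphism of $S^3$, which acts trivially on $\skein{S^3}$); the paper instead treats $h_i$ by an isotopy of embeddings and $s_{ij}$ by the explicit skein identity $\iota^{\epsilon}(L(c_{i,j})-L(c_{a,i})-L(c_{b,j}))=0$ together with Proposition \ref{prop_ideal}. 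Your version is cleaner and equally valid, granted connectedness of $\Diff^+(S^3)$.

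The step you flag as the main obstacle is, however, where you overshoot. The operator identity $\rho^{\epsilon}(\Phi(\eta))=\widetilde{\eta}_*$ on all of $\widehat{\skein{H_g^{\epsilon}}}$ is much stronger than what is needed and is doubtful as stated: one\-/sided multiplication by a fixed group\-/like element is not, in general, the automorphism induced by a diffeomorphism (on the surface the induced map is the \emph{conjugation} $x\mapsto\Phi(\eta)x\Phi(\eta)^{-1}$, and only one of the two factors is absorbed by the handlebody). All that is required is the evaluation on the empty skein, i.e.\ $\iota^{\epsilon}(\Phi(\eta^{\epsilon}))=\iota^{\epsilon}(1)$, and this does not need any handlebody analogue of the Dehn twist formula. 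The paper gets it in one line: by Lemma \ref{lemm_torelli_handle_generator} it suffices to treat the generators $\eta^{\epsilon}=t_{\xi(c_a)\xi(c'_a)}$, $t_{\xi(c_b)\xi(c'_b)}$, for which $\zeta(\eta^{\epsilon})=L(\xi(c_a))-L(\xi(c'_a))$ is killed by $\iota^{\epsilon}$ because the two curves of the bounding pair become isotopic inside $H_g^{\epsilon}$ (Lemma \ref{lemm_proof_key}(1)) --- note the relevant mechanism is this isotopy of the pair, not the vanishing $L(-A^2-A^{-2})=0$ for a disk\-/bounding curve --- and then the one\-/sided ideal property of $\ker\iota^{\epsilon}$ together with $e_*(\ker\iota^{\epsilon})=\shuugou{0}$ (Proposition \ref{prop_ideal}) disposes of all higher powers in $\Phi(\eta^{\epsilon})-1$. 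If you replace your handlebody twist formula by this direct computation on Pitsch's generators, your proof closes and coincides in substance with the paper's.
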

\subsection{Main theorem and its proof}

The aim of this subsection is to prove Theorem \ref{thm_main}.

By Proposition \ref{prop_embed_filtration}, the map
$Z: \mathcal{I}(\Sigma_{g,1}) \to \Q[[A+1]]$ is well-defined.

For $\epsilon \in \shuugou{+,-}$,
let $\skein{H_g^\epsilon}$ be the quotient of
$\Q [A,\gyaku{A}] \mathcal{T} (H_g^\epsilon)$
by the skein relation and the trivial knot relation,
where $\mathcal{T}(H_g^\epsilon)$
is the set of unoriented framed link
in $H_g^\epsilon$. We can consider  its completion
$
\widehat{\skein{H_g^\epsilon}}$,
for details see \cite{TsujiCSAI} Theorem 5.1.
We denote the embedding $\iota'^+ :
\Sigma_{g,1}  \times [0,\frac{1}{2}] \to{H_g^+}$
and the embedding $\iota'^- :
\Sigma_{g,1}  \times [\frac{1}{2},1] \to {H_g^-}$.
The embeddings
\begin{align*}
&\iota^+:\Sigma_{g,1} \times I  \to H_g^+, (x,t) \mapsto \iota'^+(x,t/2), \\
&\iota^-:\Sigma_{g,1} \times I  \to H_g^-, (x,t) \mapsto \iota'^+(x,(t+1)/2) \\
\end{align*}
induces
\begin{align*}
&\iota^+:\widehat{\skein{\Sigma_{g,1}}} \to \widehat{\skein{H_g^+}}, \ \
\iota^-:\widehat{\skein{\Sigma_{g,1}}} \to \widehat{\skein{H_g^-}}. \\
\end{align*}

By definition, we have the followings.

\begin{prop}
\label{prop_ideal}
\begin{enumerate}
\item The kernel of $\iota^+$ is a right ideal
of $\widehat{\skein{\Sigma_{g,1}}}$.
\item The kernel of $\iota^-$ is a left ideal
of $\widehat{\skein{\Sigma_{g,1}}}$.
\item We have $e_* (\ker \iota^\epsilon) =\shuugou{0}$
for $\epsilon \in \shuugou{+,-}$.
\end{enumerate}
\end{prop}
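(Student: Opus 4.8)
The plan is to reduce all three assertions to a single geometric principle: inside $H_g^\epsilon$, the embedded skein $\iota^\epsilon(xy)$ separates into a skein coming from one factor, placed in a collar of $\partial H_g^\epsilon$, stacked onto the skein coming from the other factor, placed deeper in $H_g^\epsilon$. First I would record the relevant collar structure. Since $\partial H_g^\epsilon = \Sigma_g$ and $\iota'^\epsilon$ identifies a collar of this boundary with $\Sigma_{g,1}\times[0,\tfrac12]$ (resp.\ $\Sigma_{g,1}\times[\tfrac12,1]$), stacking a skein from $\Sigma_{g,1}$ into the outermost portion of this collar defines a bilinear collar action of $\skein{\Sigma_{g,1}}$ on $\skein{H_g^\epsilon}$; it is well defined because the acting region and the region carrying the handlebody skein are disjoint, so no new crossings are created and the skein relations are applied independently in each region. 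All identities below are first established on the uncompleted level and then pass to the completions $\widehat{\skein{\cdot}}$ by continuity, which is where the ideal statements live.

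Next I would prove the key identity. Splitting $\Sigma_{g,1}\times I$ into its lower and upper halves, the product $xy$ places one factor in each half; under $\iota^\epsilon$ these map to two nested sub-collars of $\partial H_g^\epsilon$, one adjacent to $\partial H_g^\epsilon$ and one adjacent to the interior. Deleting the outer sub-collar from $H_g^\epsilon$ yields a handlebody diffeomorphic to $H_g^\epsilon$, and under this diffeomorphism the interior-adjacent factor is carried exactly to $\iota^\epsilon$ of that factor, while the boundary-adjacent factor acts on it through the collar action. The point is that, for fixed $y$, the assignment $x\mapsto\iota^+(xy)$ depends on $x$ only through $\iota^+(x)$ and is linear in it, and symmetrically for $\iota^-$. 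Because $\iota^+$ sends the end of the interval reaching the interior of $H_g^+$ to the left factor of the product of Figure \ref{figure_def_product}, whereas $\iota^-$ fills the opposite handlebody and thereby reverses the roles of the two factors, one obtains
\begin{equation*}
\iota^+(xy) = y \cdot \iota^+(x), \qquad \iota^-(xy) = x \cdot \iota^-(y),
\end{equation*}
where $\cdot$ denotes the collar action. Since the action is linear, $\iota^+(x)=0$ forces $\iota^+(xy)=0$ and $\iota^-(y)=0$ forces $\iota^-(xy)=0$; these are exactly the statements that $\ker\iota^+$ is a right ideal and $\ker\iota^-$ is a left ideal.

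For part (3) I would factor the standard embedding through the handlebodies. Write $j^\epsilon\colon H_g^\epsilon\hookrightarrow S^3$ for the inclusion, inducing $(j^\epsilon)_*\colon \widehat{\skein{H_g^\epsilon}}\to\Q[[A+1]]$. The restriction of $e$ to $\Sigma_{g,1}\times[0,\tfrac12]$ (resp.\ $[\tfrac12,1]$) is $\iota'^\epsilon$ followed by $j^\epsilon$, and the reparametrization $t\mapsto t/2$ (resp.\ $(t+1)/2$) built into $\iota^\epsilon$ merely compresses the collar in which the skein sits. A skein placed in a thinner collar of $\Sigma_g\subset S^3$ is isotopic in $S^3$ to the same skein placed in the full collar, so isotopy invariance yields $e_* = (j^\epsilon)_*\circ\iota^\epsilon$ for both $\epsilon$. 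Consequently, if $w\in\ker\iota^\epsilon$ then $e_*(w)=(j^\epsilon)_*(\iota^\epsilon(w))=(j^\epsilon)_*(0)=0$, whence $e_*(\ker\iota^\epsilon)=\shuugou{0}$.

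The main obstacle is the key identity in the second paragraph: making precise that embedding a product into the collar genuinely separates the two factors into a base skein and a collar action, together with the correct handedness. This requires careful bookkeeping of the collar coordinate against the interval (product) coordinate, and of which end of $\Sigma_{g,1}\times I$ faces the interior of each handlebody, since this is what distinguishes the right-ideal case ($\iota^+$) from the left-ideal case ($\iota^-$). The remaining verifications—linearity and well-definedness of the collar action, and the isotopy invariance used for part (3)—are routine once this is in place, which I expect is precisely the content compressed into the phrase ``by definition''.
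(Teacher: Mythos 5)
Your argument is correct and supplies exactly the content that the paper compresses into the single phrase ``By definition'': the paper gives no further proof of Proposition \ref{prop_ideal}, and your collar-action identities $\iota^+(xy)=y\cdot\iota^+(x)$ and $\iota^-(xy)=x\cdot\iota^-(y)$, together with the factorization $e_*=(j^\epsilon)_*\circ\iota^\epsilon$ for part (3), are the intended reasons. The handedness you chose (the left factor of the product being the one carried into the interior of $H_g^+$) is the one forced by, and consistent with, the stated right/left asymmetry, so there is no gap.
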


\begin{prop}
\label{prop_bch_Z}
We have $Z(\xi_1 \xi_2) =\sum_{i,j \geq 0}\dfrac{1}{(-A+A^{-1})^{i+j} i! j!}e_* ((\zeta(\xi_1))^i
(\zeta(\xi_2))^j)$.
\end{prop}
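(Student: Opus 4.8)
The plan is to recognize $Z(\xi)$ as $e_*$ applied to a group-like exponential and then to exploit the fact, from Theorem \ref{thm_zeta}, that $\zeta$ is a group homomorphism into $(F^3 \widehat{\skein{\Sigma_{g,1}}}, \bch)$. Writing $h \defeq -A+\gyaku{A}$ for brevity, the defining formula for $Z$ can be rewritten as
\begin{equation*}
Z(\xi) = e_*\left( \sum_{i=0}^\infty \frac{1}{h^i i!}(\zeta(\xi))^i \right) = e_*\left( \exp\left( \frac{\zeta(\xi)}{h} \right) \right),
\end{equation*}
where the exponential is taken with respect to the (noncommutative) product of the skein algebra and $\frac{1}{h}$ is adjoined as a scalar. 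The key observation is then that the definition $\bch(x,y) = h \log(\exp(\frac{x}{h}) \exp(\frac{y}{h}))$ is equivalent to the identity $\exp(\frac{\bch(x,y)}{h}) = \exp(\frac{x}{h}) \exp(\frac{y}{h})$ for $x,y \in F^3 \widehat{\skein{\Sigma_{g,1}}}$.

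To carry this out, first I would use Theorem \ref{thm_zeta} to write $\zeta(\xi_1 \xi_2) = \bch(\zeta(\xi_1), \zeta(\xi_2))$. Substituting into the exponential form of $Z$ and applying the $\bch$ identity above gives
\begin{equation*}
Z(\xi_1 \xi_2) = e_*\left( \exp\left( \frac{\zeta(\xi_1)}{h} \right) \exp\left( \frac{\zeta(\xi_2)}{h} \right) \right),
\end{equation*}
where the product on the right is the skein-algebra product and must be kept in this order, since the product is noncommutative. Expanding the two exponential series and using that $e_*$ is $\Q[[A+1]]$-linear then yields
\begin{equation*}
Z(\xi_1 \xi_2) = \sum_{i,j \geq 0} \frac{1}{h^{i+j} i! j!} e_*((\zeta(\xi_1))^i (\zeta(\xi_2))^j),
\end{equation*}
which is the claimed formula.

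The step I expect to be the main obstacle is making rigorous sense of the division by $h = -A+\gyaku{A}$, since $h = \frac{(1-A)(1+A)}{A}$ vanishes at $A=-1$ and hence $\frac{1}{h}$ is not an element of $\Q[[A+1]]$. The resolution is that $h$ has $(A+1)$-adic valuation exactly $1$, while Proposition \ref{prop_embed_filtration}, together with the multiplicativity of the filtration (so that $(\zeta(\xi))^i \in F^{3i} \widehat{\skein{\Sigma_{g,1}}}$ because $\zeta(\xi) \in F^3 \widehat{\skein{\Sigma_{g,1}}}$), shows $e_*((\zeta(\xi))^i) \in ((A+1)^{\gauss{(3i+1)/2}})$ with $\gauss{(3i+1)/2} \geq i$. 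Thus each term $\frac{1}{h^i i!} e_*((\zeta(\xi))^i)$ genuinely lies in $\Q[[A+1]]$, its valuation is at least $\gauss{(i+1)/2}$, and the sum converges; the analogous estimate with $i$ replaced by $i+j$ handles the double series. This is precisely the well-definedness already granted for $Z$. To justify the formal manipulations, I would verify the $\bch$ identity and the expansion of the double series modulo $(A+1)^N$ for each $N$ — where only finitely many terms survive and all rearrangements are legitimate — and then pass to the inverse limit over $N$, the filtration estimates guaranteeing that the rearrangement of the $e_*$-images against the $\Q[[A+1]]$-action is valid.
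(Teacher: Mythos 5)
Your argument is correct and is exactly the reasoning the paper leaves implicit: the proposition is stated without proof because it follows directly from $\zeta$ being a group homomorphism into $(F^3\widehat{\skein{\Sigma_{g,1}}},\bch)$ (Theorem \ref{thm_zeta}) together with the defining identity $\exp(\bch(x,y)/(-A+\gyaku{A}))=\exp(x/(-A+\gyaku{A}))\exp(y/(-A+\gyaku{A}))$, which is precisely your computation. Your care with the order of the noncommutative product and with the $(A+1)$-adic convergence via Proposition \ref{prop_embed_filtration} supplies the details the author omitted.
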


\begin{lemm}
\label{lemm_proof_key}
\begin{enumerate}
\item 
Let $c_a$, $c'_a$, $c_b$ and $c'_b$ be 
simple closed curves as in Figure \ref{figure_handle_torelli_generator}.
For $\epsilon \in \shuugou{+,-}$, 
we have 
\begin{align*}
\iota^\epsilon (\xi(L(c_a)-L(c'_a))=0,
\iota^\epsilon (\xi(L(c_b)-L(c'_b))=0
\end{align*}
for $\xi \in \mathcal{M}(H_{g,1}^\epsilon)$.
\item 
Let $c_{a,i}$, $c_{b,j}$ and $c_{i,j}$
be simple closed curves as in
Figure \ref{figure_handle_G_S_j_i} or Figure \ref{figure_handle_G_S_i_j}.
For $\epsilon \in \shuugou{+,-}$, 
we have 
\begin{align*}
\iota^\epsilon (L(c_{i,j})-L(c_{a,i})-L(c_{b,j}))=0
\end{align*}for $i \neq j$.
\end{enumerate}

\end{lemm}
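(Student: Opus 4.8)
The plan is to reduce both parts to two elementary facts about the homomorphism $\iota^\epsilon$ and the curve function $L$. First I would record that $\iota^\epsilon \colon \widehat{\skein{\Sigma_{g,1}}} \to \widehat{\skein{H_g^\epsilon}}$ is a continuous $\Q[[A+1]]$-algebra homomorphism sending the skein class of a loop $c$ to the skein class of its image loop. Since $L(c)$ is, by its defining formula, a fixed $\Q[[A+1]]$-power series in the single skein element $c$, the homomorphism commutes with it, i.e. $\iota^\epsilon(L(c)) = L(\iota^\epsilon(c))$; in particular, if $\iota^\epsilon(c) = \iota^\epsilon(c')$ (that is, $c$ and $c'$ are isotopic inside $H_g^\epsilon$), then $\iota^\epsilon(L(c)) = \iota^\epsilon(L(c'))$. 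Second I would evaluate $L$ on a trivial loop: if $c$ bounds a disk in $H_g^\epsilon$ then $\iota^\epsilon(c) = -A^2 - A^{-2}$, and substituting this scalar gives $\arccosh((A^2+A^{-2})/2) = 2\log(-A)$, whence $L(-A^2-A^{-2}) = \frac{-A+A^{-1}}{4\log(-A)}\cdot 4(\log(-A))^2 - (-A+A^{-1})\log(-A) = 0$. Thus the constant term in the definition of $L$ is exactly what makes a disk-bounding loop contribute $0$.

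For part (1) I would first remove $\xi$ by equivariance. Any $\xi \in \mathcal{M}(H_{g,1}^\epsilon)$ extends to a diffeomorphism $\tilde{\xi}$ of $H_g^\epsilon$, and since $\iota^\epsilon$ is induced by an embedding compatible with this extension, it intertwines the actions: $\iota^\epsilon \circ \xi = \tilde{\xi}_* \circ \iota^\epsilon$ with $\tilde{\xi}_*$ an automorphism. Hence it suffices to prove $\iota^\epsilon(L(c_a) - L(c'_a)) = 0$ and $\iota^\epsilon(L(c_b) - L(c'_b)) = 0$ and then apply $\tilde{\xi}_*$, which kills $0$. The pairs $(c_a,c'_a)$ and $(c_b,c'_b)$ appear in Lemma \ref{lemm_torelli_handle_generator} as bounding pairs whose associated maps lie in $\mathcal{M}(H_{g,1}^\epsilon)$; reading Figure \ref{figure_handle_torelli_generator} one exhibits the annulus in $H_g^\epsilon$ showing $c_a$ and $c'_a$ are isotopic there, so $\iota^\epsilon(c_a) = \iota^\epsilon(c'_a)$ and the first fact yields the vanishing, and likewise for $c_b,c'_b$.

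For part (2) I would argue separately for the two handlebodies, the two cases being symmetric under interchanging the roles of $(a,i)$ and $(b,j)$. In $H_g^\epsilon$, one reads from Figure \ref{figure_handle_G_S_j_i} and Figure \ref{figure_handle_G_S_i_j} that one of $c_{a,i}, c_{b,j}$ bounds a disk there, while $c_{i,j}$ becomes isotopic to the other after compressing along that disk. Taking $\epsilon = +$ for concreteness, $c_{a,i}$ bounds a disk, so $\iota^+(L(c_{a,i})) = 0$ by the second fact, and $c_{i,j}$ is isotopic to $c_{b,j}$ in $H_g^+$, so $\iota^+(L(c_{i,j})) = \iota^+(L(c_{b,j}))$ by the first fact. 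Therefore $\iota^+(L(c_{i,j}) - L(c_{a,i}) - L(c_{b,j})) = \iota^+(L(c_{b,j})) - 0 - \iota^+(L(c_{b,j})) = 0$, and the case $\epsilon = -$ is identical with the labels swapped.

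The main obstacle is the geometric input: verifying from the figures that the specific curves really are isotopic in, or bound disks in, the correct handlebody $H_g^\epsilon$, since this is where the precise placement of $c_a,c'_a,c_b,c'_b$ and of $c_{a,i},c_{b,j},c_{i,j}$ enters. Everything else is formal — the naturality identity $\iota^\epsilon(L(c)) = L(\iota^\epsilon(c))$, the single computation $L(-A^2-A^{-2}) = 0$, and the equivariance reduction of the $\xi$-version to the base case.
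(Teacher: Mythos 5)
The paper states Lemma \ref{lemm_proof_key} with no proof at all, so there is nothing to compare your argument against line by line; what you have written is, as far as I can tell, exactly the justification the author intends the reader to supply. Your two algebraic ingredients are correct and are the whole point of the definition of $L$: the naturality $\iota^\epsilon(L(c))=L(\iota^\epsilon(c))$ holds because $L(c)$ lies in the closed subalgebra generated by $c$ over the ground ring and $\iota^\epsilon$ is a continuous module map sending parallel copies of $c$ to parallel copies of $\iota^\epsilon(c)$, and the computation $L(-A^2-A^{-2})=0$ is right, since $\bigl(\arccosh((A^2+A^{-2})/2)\bigr)^2=(2\log(-A))^2$. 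The equivariance reduction in part (1) is also sound because $\mathcal{M}(H_{g,1}^\epsilon)$ consists by definition of classes extending over the handlebody. Two small points deserve explicit mention: first, the isotopies in $H_g^\epsilon$ must be isotopies of \emph{framed} knots, so you should note that the annulus cobounded by the two curves (resp.\ the disk bounded by one of them) carries the surface framing across the isotopy --- this is where the figures genuinely matter; second, in part (1) the vanishing is asserted for \emph{both} $\epsilon$ and \emph{both} pairs, so you need the geometric input (isotopic, or both disk-bounding) in each of the two handlebodies separately, not just in the one where the corresponding BP map is most naturally seen to extend. With those checks made from Figures \ref{figure_handle_torelli_generator}, \ref{figure_handle_G_S_j_i} and \ref{figure_handle_G_S_i_j}, your proof is complete.
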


By Lemma \ref{lemm_torelli_check},
in order to prove Theorem \ref{thm_main},
it is enough to check the following lemmas.

\begin{lemm}
For any $i$, we have $e_* \circ h_i =e_*$.
Furthermore, we have $Z (h_i \xi h_i^{-1}) =Z (\xi)$.
\end{lemm}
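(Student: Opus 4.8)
The plan is to establish the two claimed equalities by relating the half-twist $h_i$ to the geometry of the standard embedding and to the handlebody inclusions. First I would observe that $h_i$ lies in $G \subset \mathcal{M}(H_{g,1}^+) \cap \mathcal{M}(H_{g,1}^-)$, so it extends to a diffeomorphism of $S^3$ preserving both handlebodies $H_g^+$ and $H_g^-$. Since the standard embedding $e$ factors the augmentation $e_*$ through the inclusions of $\Sigma_{g,1} \times I$ into $S^3$, an isotopy of $S^3$ cannot change the resulting rational power series. More precisely, because $h_i$ is realized by an ambient diffeomorphism of $S^3$ carrying the image of $e$ to an isotopic embedding, the induced map on $\widehat{\skein{\Sigma_{g,1}}} \to \Q[[A+1]]$ is unchanged; this gives $e_* \circ h_i = e_*$.

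For the second equality I would expand $Z(h_i \xi h_i^{-1})$ using the defining formula. The key is that $\zeta$ is natural with respect to the mapping class group action: since $h_i \in G$ acts on $\mathcal{I}(\Sigma_{g,1})$ by conjugation, and $\zeta$ intertwines this conjugation with the action of $h_i$ on $\widehat{\skein{\Sigma_{g,1}}}$, we have $\zeta(h_i \xi h_i^{-1}) = h_i(\zeta(\xi))$. Substituting into the formula for $Z$ yields
\begin{equation*}
Z(h_i \xi h_i^{-1}) = \sum_{i=0}^\infty \frac{1}{(-A+\gyaku{A})^i i!} e_*\left( (h_i(\zeta(\xi)))^i \right).
\end{equation*}
Since $h_i$ acts as an algebra automorphism, $(h_i(\zeta(\xi)))^i = h_i((\zeta(\xi))^i)$, and then I would invoke the first equality $e_* \circ h_i = e_*$ termwise to collapse each summand back to $e_*((\zeta(\xi))^i)$, recovering $Z(\xi)$.

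The main obstacle I anticipate is justifying the naturality $\zeta(h_i \xi h_i^{-1}) = h_i(\zeta(\xi))$ carefully, together with the compatibility of the half-twist action on $\widehat{\skein{\Sigma_{g,1}}}$ with the automorphism formula from Theorem \ref{thm_zeta}. One must verify that $h_i$, although a mapping class rather than a Torelli element, still acts on the skein algebra through a well-defined continuous algebra automorphism preserving the filtration, so that it commutes with the infinite sum and the power operation. I would reduce this to the functoriality of $\skein{-}$ under diffeomorphisms and the fact that $h_i$ permutes the curves entering the definition of $\zeta$ compatibly with the bounding-pair structure. Once naturality is secured, the interchange of $e_* \circ h_i = e_*$ with the sum is a routine consequence of the filtration estimate in Proposition \ref{prop_embed_filtration}, which guarantees convergence term by term.
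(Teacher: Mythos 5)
Your proposal is correct and follows essentially the same route as the paper: the paper's proof likewise rests on the observation that $e \circ h_i$ and $e$ are isotopic embeddings (hence $e_* \circ h_i = e_*$) and then on the naturality $e_*(\zeta(h_i \xi h_i^{-1})) = e_* \circ h_i(\zeta(\xi))$, which you spell out in more detail. The extra care you devote to justifying that $h_i$ acts as a filtered algebra automorphism commuting with the sum is a welcome elaboration of what the paper leaves implicit, not a divergence in method.
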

\begin{proof}
The embeddings $e \circ h_i $ and $e$ are
isotopic. This proves the first claim.
Using this, we have $e_* (\zeta (h_i \xi h_i^{-1}))=e_* \circ h_i (\zeta(\xi))$.
This proves the second claim.
This proves the lemma.
\end{proof}

\begin{lemm}
For any $i \neq j$, we have $e_* \circ s_{ij} =e_*$.
Furthermore, we have $Z (s_{ij }\xi s_{ij}^{-1}) =Z (\xi)$.
\end{lemm}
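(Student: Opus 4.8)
The plan is to follow the same two-step pattern as the preceding lemma for $h_i$, the only genuinely new input being that the relevant embeddings remain isotopic even though $s_{ij}$ is now a product of Dehn twists rather than a single half-twist. First I would prove $e_* \circ s_{ij} = e_*$. Since $s_{ij}$ acts on $\widehat{\skein{\Sigma_{g,1}}}$ through $(s_{ij}\times\id)_*$ on $\Sigma_{g,1}\times I$, the composite $e_* \circ s_{ij}$ is exactly the map induced by the embedding $e \circ (s_{ij} \times \id):\Sigma_{g,1} \times I \to S^3$, so it suffices to show this embedding is isotopic to $e$. The key point is that, by the very definition of $G$, the mapping class $s_{ij}$ lies in $\mathcal{M}(H_{g,1}^+) \cap \mathcal{M}(H_{g,1}^-)$ and hence extends to an orientation-preserving diffeomorphism $\tilde s_{ij}$ of $S^3 = H_g^+ \cup_\varphi H_g^-$ preserving each handlebody. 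Then $\tilde s_{ij} \circ e$ and $e \circ (s_{ij}\times\id)$ agree on the middle surface $\Sigma_{g,1}\times\shuugou{\tfrac{1}{2}}$ and both carry $\Sigma_{g,1}\times[0,\tfrac{1}{2}]$ into $H_g^+$ and $\Sigma_{g,1}\times[\tfrac{1}{2},1]$ into $H_g^-$, so they are isotopic; and since every orientation-preserving self-diffeomorphism of $S^3$ is isotopic to the identity, $\tilde s_{ij}\circ e$ is in turn isotopic to $e$. Composing the two isotopies gives $e\circ(s_{ij}\times\id) \simeq e$, whence $e_*\circ s_{ij} = e_*$.

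The second claim then follows formally, exactly as in the $h_i$ case. Because $s_{ij}$ lies in the full mapping class group and $\mathcal{I}(\Sigma_{g,1})$ is normal, we have $s_{ij}\xi s_{ij}^{-1}\in\mathcal{I}(\Sigma_{g,1})$, and the equivariance of $\zeta$ gives $\zeta(s_{ij}\xi s_{ij}^{-1}) = (s_{ij})_*(\zeta(\xi))$; this is immediate from $\zeta(t_{c_1c_2}) = L(c_1)-L(c_2)$ together with the naturality $(s_{ij})_*(L(c)) = L(s_{ij}(c))$. Since $(s_{ij})_*$ is a ring automorphism of $\widehat{\skein{\Sigma_{g,1}}}$, it commutes with taking powers, so $e_*((\zeta(s_{ij}\xi s_{ij}^{-1}))^i) = (e_*\circ s_{ij})((\zeta(\xi))^i) = e_*((\zeta(\xi))^i)$ by the first claim. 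Summing over $i$ with the coefficients $\tfrac{1}{(-A+\gyaku{A})^i i!}$ yields $Z(s_{ij}\xi s_{ij}^{-1}) = Z(\xi)$.

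I expect the main obstacle to be the first claim, namely rigorously producing the isotopy $e\circ(s_{ij}\times\id)\simeq e$. Unlike the half-twist $h_i$, whose extension to $S^3$ is visibly trivial, here one must genuinely invoke both that $s_{ij}$ extends over the two handlebodies and that $\mathrm{Diff}^+(S^3)$ is connected. If a purely skein-theoretic verification of $e_*\circ s_{ij}=e_*$ is preferred over the diffeomorphism-extension argument, the relevant input is Lemma \ref{lemm_proof_key}(2): it records that $L(c_{i,j}) - L(c_{a,i}) - L(c_{b,j})$, the leading term of the logarithm of $s_{ij}=t_{c_{i,j}}t_{c_{a,i}}^{-1}t_{c_{b,j}}^{-1}$, lies in $\ker\iota^+\cap\ker\iota^-$, which one would combine with Proposition \ref{prop_ideal} to kill the action of $s_{ij}$ under $e_*$. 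Everything after the first claim is bookkeeping identical to the $h_i$ case.
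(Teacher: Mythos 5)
Your argument is correct, but for the first claim you take a genuinely different route from the paper. The paper proves $e_*\circ s_{ij}=e_*$ purely skein-theoretically: since $c_{i,j}$, $c_{a,i}$, $c_{b,j}$ are pairwise disjoint, $s_{ij}$ acts on $\widehat{\skein{\Sigma_{g,1}}}$ as $\exp(\sigma(y))$ with $y=L(c_{i,j})-L(c_{a,i})-L(c_{b,j})$ \emph{exactly} (not merely as leading term of a logarithm, as your closing remark suggests); then Lemma \ref{lemm_proof_key}(2) puts $y$ in $\ker\iota^+\cap\ker\iota^-$, and Proposition \ref{prop_ideal}(1)(2)(3) shows $e_*(\sigma(y)(x))=e_*\bigl(\tfrac{1}{-A+\gyaku{A}}(yx-xy)\bigr)=0$ for every $x$, whence $e_*\circ\exp(\sigma(y))=e_*$. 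You instead extend $s_{ij}\in\mathcal{M}(H_{g,1}^+)\cap\mathcal{M}(H_{g,1}^-)$ to a diffeomorphism of $S^3$ preserving both handlebodies, invoke Cerf's theorem that $\mathrm{Diff}^+(S^3)$ is connected, and use uniqueness of bicollars to get $e\circ(s_{ij}\times\id)\simeq e$; this is exactly the argument the paper uses for the $h_i$ lemma, and it is valid here too, so your proof goes through. The trade-off: your topological argument is uniform for all of $G$ and needs no skein computation, but it leans on Cerf's theorem and an implicit collar-uniqueness step; the paper's algebraic argument stays entirely inside the skein formalism it has already built (and is the reason Lemma \ref{lemm_proof_key}(2) appears in the paper at all). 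You correctly identify this skein route as the alternative, and your treatment of the second claim via equivariance of $\zeta$ and multiplicativity of $(s_{ij})_*$ matches the paper's.
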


\begin{proof}
We fix an element $x$ of $\widehat{\skein{\Sigma_{g,1}}}$.
We have $s_{ij} (x) =\exp (\sigma(L(c_{i,j})-L(c_{a,i})-L(c_{b,j})))(x)$.
Using Lemma \ref{lemm_proof_key}(2) and Proposition
\ref{prop_ideal} (1)(2)(3),
we have $e_*(\exp (\sigma(L(c_{i,j})-L(c_{a,i})-L(c_{b,j})))(x))=e_*(x)$.
This proves the first claim.
Using this, we have $e_* (\zeta (s_{ij} \xi s_{ij}^{-1}))=e_* \circ s_{ij} (\zeta(\xi))$.
This proves the second claim.
This  proves the lemma.
\end{proof}

\begin{lemm}
\begin{enumerate}
\item We have $Z(\xi \eta^+) =Z(\xi)$ for any 
$\xi \in \mathcal{I}(\Sigma_{g,1})$ and 
any $\eta^+ \in \shuugou{t_{\xi(c_a) \xi( c'_a)}|\xi \in \mathcal{M} (H_{g,1}^+)}
\cup \shuugou{t_{\xi(c_b) \xi(c'_b)}|\xi \in \mathcal{M} (H_{g,1}^+)}$.
\item We have $Z(\eta^- \xi ) =Z(\xi)$ for any 
$\xi \in \mathcal{I}(\Sigma_{g,1})$ and 
any $\eta^- \in \shuugou{t_{\xi(c_a) \xi( c'_a)}|\xi \in \mathcal{M} (H_{g,1}^-)}
\cup \shuugou{t_{\xi(c_b) \xi(c'_b)}|\xi \in \mathcal{M} (H_{g,1}^-)}$.
\end{enumerate}
\end{lemm}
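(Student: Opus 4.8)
The plan is to expand $Z(\xi\eta^+)$ using Proposition \ref{prop_bch_Z} and to show that every term in which $\eta^+$ genuinely contributes is annihilated by $e_*$, leaving exactly $Z(\xi)$. First I would rewrite $\zeta(\eta^+)$. Since $\eta^+$ is one of $t_{\xi'(c_a)\xi'(c'_a)}$ or $t_{\xi'(c_b)\xi'(c'_b)}$ with $\xi'\in\mathcal{M}(H_{g,1}^+)$, the defining formula $\zeta(t_{c_1c_2})=L(c_1)-L(c_2)$ from Theorem \ref{thm_zeta}, together with the naturality $\xi'(L(c))=L(\xi'(c))$ (valid because $\xi'$ acts as a continuous algebra automorphism and $L(c)$ is a power series in the class of $c$), gives $\zeta(\eta^+)=\xi'(L(c_a)-L(c'_a))$ (resp. the $c_b$-version). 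Lemma \ref{lemm_proof_key}(1) with $\epsilon=+$ then yields $\iota^+(\zeta(\eta^+))=0$, i.e. $\zeta(\eta^+)\in\ker\iota^+$.

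Next, by Proposition \ref{prop_bch_Z},
\begin{equation*}
Z(\xi\eta^+)=\sum_{i,j\geq 0}\frac{1}{(-A+\gyaku{A})^{i+j}\,i!\,j!}\,e_*\!\left((\zeta(\xi))^i(\zeta(\eta^+))^j\right),
\end{equation*}
so it is enough to prove $e_*\!\left((\zeta(\xi))^i(\zeta(\eta^+))^j\right)=0$ for all $j\geq 1$; the surviving $j=0$ terms are precisely $Z(\xi)$. Because $\ker\iota^+$ is a right ideal (Proposition \ref{prop_ideal}(1)) and $\zeta(\eta^+)\in\ker\iota^+$, each power $(\zeta(\eta^+))^j$ with $j\geq 1$ again lies in $\ker\iota^+$. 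It remains to see that $e_*$ kills the product $(\zeta(\xi))^i(\zeta(\eta^+))^j$; this is where Proposition \ref{prop_ideal}(3) is used, together with the geometric fact that under the standard embedding $e$ the lower (right-hand) factor of a skein product is pushed into $H_g^+$, so that a right-hand factor lying in $\ker\iota^+$ forces the whole term to vanish under $e_*$. Granting this, all $j\geq 1$ terms die and $Z(\xi\eta^+)=Z(\xi)$.

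For statement (2) I would run the mirror-image argument. Now $\zeta(\eta^-)=\xi'(L(c_a)-L(c'_a))$ (resp. the $c_b$-version) lies in $\ker\iota^-$ by Lemma \ref{lemm_proof_key}(1) with $\epsilon=-$, $\ker\iota^-$ is a left ideal (Proposition \ref{prop_ideal}(2)), and in the expansion
\begin{equation*}
Z(\eta^-\xi)=\sum_{i,j\geq 0}\frac{1}{(-A+\gyaku{A})^{i+j}\,i!\,j!}\,e_*\!\left((\zeta(\eta^-))^i(\zeta(\xi))^j\right)
\end{equation*}
the factor $\zeta(\eta^-)$ sits on the left, i.e. on the upper factor, which $e$ pushes into $H_g^-$. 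Hence the terms with $i\geq 1$ are annihilated by $e_*$ exactly as before, and $Z(\eta^-\xi)=Z(\xi)$.

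The step I expect to be the main obstacle is the handedness bookkeeping in the vanishing $e_*\!\left((\zeta(\xi))^i(\zeta(\eta^+))^j\right)=0$. One must match three sides consistently: the side on which $\eta^\pm$ occurs in the product coming from Proposition \ref{prop_bch_Z}, the handlebody ($H_g^+$ for the lower/right factor, $H_g^-$ for the upper/left factor) into which $e$ carries that factor, and the correct one-sided ideal ($\ker\iota^+$ a right ideal, $\ker\iota^-$ a left ideal). The one-sided ideal property only guarantees that the powers $(\zeta(\eta^\pm))^j$ stay in the respective kernel; the actual annihilation by $e_*$ then relies on the precise way $e_*$ factors through $\iota^+$ (resp. $\iota^-$) encoded in Proposition \ref{prop_ideal}(3). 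Once this correspondence is pinned down, the remaining resummation is routine.
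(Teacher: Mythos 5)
Your proof is correct and follows essentially the same route as the paper: expand $Z(\xi\eta^+)$ via Proposition \ref{prop_bch_Z}, observe that $\zeta(\eta^+)\in\ker\iota^+$ by Lemma \ref{lemm_proof_key}(1) (via the naturality $\zeta(t_{\xi'(c)\xi'(c')})=\xi'(L(c)-L(c'))$), and annihilate every term with $j\geq 1$ using the one-sided ideal property and $e_*(\ker\iota^{\pm})=\{0\}$ from Proposition \ref{prop_ideal}. The paper's proof is just a terser version of this, citing Proposition \ref{prop_ideal}(1)(3) without comment; the handedness bookkeeping you flag as the delicate point is precisely what those items are set up to encode.
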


\begin{proof}
We prove only (1) (because the proof of (2) is almost the same.)
By Proposition \ref{prop_bch_Z}, we have 
$Z(\xi \eta^+) =\sum_{i,j \geq 0}\dfrac{1}{(-A+A^{-1})^{i+j} i! j!}e_* ((\zeta(\xi))^i
(\zeta(\eta^+))^j)$.
Using Lemma \ref{lemm_proof_key} and Proposition \ref{prop_ideal} (1)(3),
we obtain
\begin{equation*}
Z(\xi \eta^+) =\sum_{i,j \geq 0}\frac{1}{(-A+A^{-1})^{i+j} i! j!}e_* ((\zeta(\xi))^i
(\zeta(\eta^+))^j)=\sum_{i \geq 0}\frac{1}{(-A+A^{-1})^{i} i!}e_* ((\zeta(\xi))^i)=Z(\xi).
\end{equation*}
This proves the lemma.
\end{proof}

\begin{proof}[Proof of Theorem \ref{thm_main}]
By Fact \ref{fact_map_3_manifold} and
Lemma \ref{lemm_torelli_check}, it is enough to check the following
\begin{align*}
Z(h_i \xi {h_i}^{-1}) =Z(\xi), \\
Z(s_{ij} \xi {s_{ij}}^{-1}) =Z(\xi), \\
Z(\xi \eta^+) =Z(\xi), \\
Z(\eta^- \xi)=Z(\xi),
\end{align*}
for any $\xi \in \mathcal{I}(\Sigma_{g,1})$,
any $\eta^+ \in \shuugou{t_{\xi(c_a) \xi( c'_a)}|\xi \in \mathcal{M} (H_{g,1}^+)}
\cup \shuugou{t_{\xi(c_b) \xi(c'_b)}|\xi \in \mathcal{M} (H_{g,1}^+)}$,
any $\eta^- \in \shuugou{t_{\xi(c_a) \xi( c'_a)}|\xi \in \mathcal{M} (H_{g,1}^-)}
\cup \shuugou{t_{\xi(c_b) \xi(c'_b)}|\xi \in \mathcal{M} (H_{g,1}^-)}$
and any $i \neq j$.
The above lemmas prove
these equations.
This proves the Theorem.

\end{proof}

\bigskip

This invariant satisfies the following conditions.

\begin{prop}
\label{prop_z_disjoint}
For $M_1, M_2 \in \mathcal{H}(3)$, we have
\begin{equation*}
z(M_1 \sharp M_2) =z(M_1)z(M_2)
\end{equation*}
where $M_1 \sharp M_2 $ is the connected sum of $M_1$ and $M_2$.
\end{prop}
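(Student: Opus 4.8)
The plan is to reduce the connected sum to a product of two mapping classes with disjoint support and then exploit the multiplicativity of the Kauffman bracket on split links. Write $M_1 = M(\xi_1)$ with $\xi_1 \in \mathcal{I}(\Sigma_{g_1,1})$ and $M_2 = M(\xi_2)$ with $\xi_2 \in \mathcal{I}(\Sigma_{g_2,1})$. First I would recall the standard description of a Heegaard splitting of a connected sum: the boundary connected sum $\Sigma_{g_1,1} \natural \Sigma_{g_2,1} = \Sigma_{g_1+g_2,1}$ of the two splittings realizes $M_1 \sharp M_2$ as the genus $g_1+g_2$ splitting glued by $\varphi \circ (\xi_1 \xi_2)$, where $\Sigma_{g_1,1}$ and $\Sigma_{g_2,1}$ are embedded in $\Sigma_{g_1+g_2,1}$ with disjoint support (the two blocks of handles lying on opposite sides of a separating arc), and $\xi_1, \xi_2$ are regarded as commuting elements of $\mathcal{I}(\Sigma_{g_1+g_2,1})$ extended by the identity. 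By Fact \ref{fact_map_3_manifold} this gives $z(M_1 \sharp M_2) = Z(\xi_1 \xi_2)$, and since $z$ is defined through the direct limit it is insensitive to the stabilizations involved.

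Next I would choose the standard embedding $e$ of $\Sigma_{g_1+g_2,1}$ adapted to a separating $2$-sphere $S \subset S^3$ splitting $S^3$ into two balls $B_1, B_2$ compatibly with this boundary connected sum decomposition: the sub-cylinder over $\Sigma_{g_1,1}$ is placed in $B_1$ and the sub-cylinder over $\Sigma_{g_2,1}$ in $B_2$. Any standard embedding may be isotoped to such a position without changing $e_*$. Since $\xi_1$ and $\xi_2$ have disjoint support, Theorem \ref{thm_zeta} guarantees that $\zeta(\xi_1)$ is represented by skein elements supported over $\Sigma_{g_1,1}$ and $\zeta(\xi_2)$ by skein elements supported over $\Sigma_{g_2,1}$; in particular they commute, and for all $i,j$ the product $(\zeta(\xi_1))^i (\zeta(\xi_2))^j$ is represented by a tangle that is the split union of a tangle lying in $B_1$ and a tangle lying in $B_2$.

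The key step is then the multiplicativity of the Kauffman bracket on split links: since $\skein{S^3} \cong \Q[A, \gyaku{A}]$ sends a split union to the product of brackets, and since the two factors above lie in the disjoint balls $B_1, B_2$, I obtain
\[
e_* \bigl( (\zeta(\xi_1))^i (\zeta(\xi_2))^j \bigr) = e_*\bigl( (\zeta(\xi_1))^i \bigr)\, e_*\bigl( (\zeta(\xi_2))^j \bigr)
\]
for every $i, j \geq 0$, where on the right each factor agrees with the value computed for the genus $g_k$ standard embedding, the extra unknotted unlinked handles contributing nothing. Plugging this into Proposition \ref{prop_bch_Z} and factoring the double sum,
\[
Z(\xi_1 \xi_2) = \sum_{i,j \geq 0} \frac{e_*((\zeta(\xi_1))^i)\, e_*((\zeta(\xi_2))^j)}{(-A+\gyaku{A})^{i+j} i!\, j!} = Z(\xi_1)\, Z(\xi_2),
\]
which together with the first paragraph yields $z(M_1 \sharp M_2) = z(M_1) z(M_2)$.

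The main obstacle I anticipate is the geometric bookkeeping in the second and third steps: verifying that the standard embedding may genuinely be chosen compatibly with the connecting sphere $S$ so that disjoint support on the surface translates into a split link in $S^3$, and checking that the stacking product in $\widehat{\skein{\Sigma_{g_1+g_2,1}}}$ of elements with disjoint horizontal support coincides, after applying $e$, with the side-by-side split product in the two balls. Once this is set up, the multiplicativity of the bracket on split links and the factorization of the exponential double sum are formal.
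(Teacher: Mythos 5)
Your proposal is correct and follows essentially the same route as the paper: the paper also realizes $M_1\sharp M_2$ via two disjointly embedded subsurfaces $\Sigma^1,\Sigma^2\subset\Sigma_{g,1}$ supporting $\xi_1$ and $\xi_2$, invokes the split-union multiplicativity $e_*(\iota_1(x_1)\iota_2(x_2))=e_*(\iota_1(x_1))e_*(\iota_2(x_2))$, and factors the resulting double exponential sum exactly as you do via Proposition \ref{prop_bch_Z}. The only cosmetic difference is that the paper states the split-link identity as a one-line remark rather than elaborating the separating-sphere bookkeeping you flag as the main obstacle.
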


\begin{proof}
Let $\iota_1:\Sigma^1 \to \Sigma_{g,1}$ and $\iota_2:\Sigma^2 \to \Sigma_{g,1}$
 be the embedding maps as in Figure \ref{figure_Sigma_disjoint}.
The embedding maps induces 
\begin{align*}
&\iota_1:\mathcal{M}(\Sigma^1) \to \mathcal{M}(\Sigma_{g.1}), 
&\iota_2:\mathcal{M}(\Sigma^2) \to \mathcal{M}(\Sigma_{g,1}), \\
&\iota_1:\skein{\Sigma^1} \to \skein{\Sigma_{g,1}}, 
&\iota_2:\skein{\Sigma^2} \to \skein{\Sigma_{g,2}}.
\end{align*}
We remark
$e_* (\iota_1(x_1) \iota_2(x_2))=e_*(\iota_1(x_1))e_*(\iota_2(x_2))$
for $x_1 \in \skein{\Sigma^1}$ and $x_2 \in \skein{\Sigma^2}$.
For $\xi_1 \in \iota_1(\mathcal{M}(\Sigma^1))$
and $\xi_2 \in \iota_2(\mathcal{M}(\Sigma^2))$
 we have
\begin{align*}
&z(M(\xi_1)\sharp M(\xi_2))=z(M(\xi_1 \circ \xi_2)) 
=Z(\xi_1 \circ \xi_2)=\sum_{i=1}^\infty \frac{1}{(-A+A^{-1})^i i!}e_*((\zeta(\xi_1 \circ \xi_2))^i) \\
&=\sum_{i,j \geq 0} \frac{1}{(-A+A^{-1})^{i+j} i!j!}e_*((\zeta(\xi_1))^i (\zeta(\xi_2))^j) 
=\sum_{i,j \geq 0} \frac{1}{(-A+A^{-1})^{i+j} i!j!}e_*((\zeta(\xi_1))^i )e_*((\zeta(\xi_2))^j) \\
&=(\sum_{i=0}^\infty\frac{1}{(-A+A^{-1})^i i!}e_*((\zeta(\xi_1))^i))(\sum_{j=0}^\infty\frac{1}{(-A+A^{-1})^j j!}e_*((\zeta(\xi_2))^j))=z(M(\xi_1))z(M(\xi_2)).
\end{align*}
This proves the proposition.

\begin{figure}
\begin{picture}(300,140)
\put(0,0){\includegraphics[width=300pt]{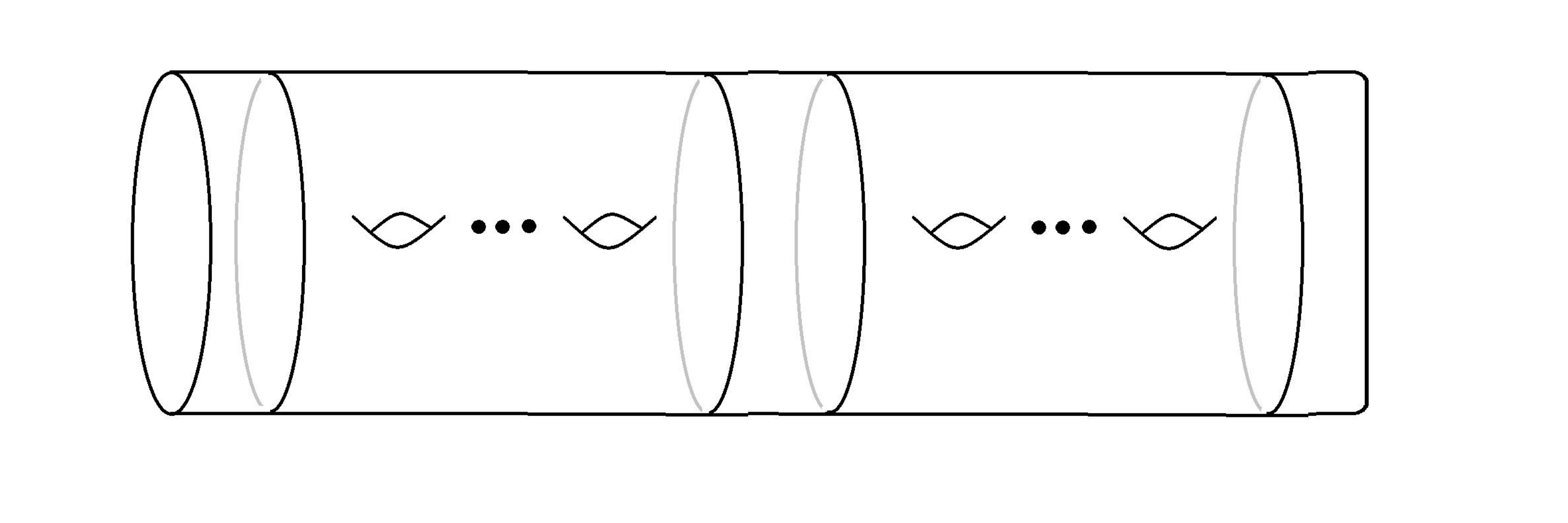}}
\put(100,70){$\Sigma^1$}
\put(200,70){$\Sigma^2$}
\end{picture}
\caption{$\Sigma^1$ and $\Sigma^2$}
\label{figure_Sigma_disjoint}
\end{figure}
\end{proof}

\begin{prop}
For $\xi_1 \in \zeta^{-1} (F^{n_1+2}\widehat{\skein{\Sigma_{g,1}}}), 
\cdots, \xi_k \in \zeta^{-1}(F^{n_k+2} \widehat{\skein{\Sigma_{g,1}}})$,
we have
\begin{equation*}
\sum_{\epsilon_i \in \shuugou{1,0}}(-1)^{\sum \epsilon_i}
z (M(\xi_1^{\epsilon_1} \cdots \xi_k^{\epsilon_k}))
\in (A+1)^{\gauss{(n_1 + \cdots +n_k+1)/{2}}}\Q [[A+1]].
\end{equation*}
We remark that $\zeta^{-1} (F^3 \widehat{\skein{\Sigma_{g,1}}})$
equals $\mathcal{I} (\Sigma_{g,1})$
and
that $\zeta^{-1}(F^4 \widehat{\skein{\Sigma_{g,1}}})$
equals the Johnson kernel.
\end{prop}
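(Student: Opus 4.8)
The plan is to collapse the alternating sum into a single application of $e_*$ to one explicit product in $\widehat{\skein{\Sigma_{g,1}}}$, and then to bound that product using the filtration $\filtn{F^n\widehat{\skein{\Sigma_{g,1}}}}$ together with Proposition \ref{prop_embed_filtration}. First I would set $u\defeq -A+\gyaku{A}$ and, for $\xi\in\mathcal{I}(\Sigma_{g,1})$, introduce
\begin{equation*}
g(\xi)\defeq \sum_{i=0}^\infty \frac{1}{u^i i!}(\zeta(\xi))^i=\exp\Big(\frac{\zeta(\xi)}{u}\Big)\in\widehat{\skein{\Sigma_{g,1}}},
\end{equation*}
using the skein product; this is well defined because $\zeta(\xi)$ is divisible by $u$ (each $L(c)$ is, by its defining formula, and $\bch$ preserves $u$-divisibility) and because $\zeta(\xi)/u$ lies in the augmentation ideal, so that $Z(\xi)=e_*(g(\xi))$. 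The structural point is that $g$ is a group homomorphism: since $\zeta$ is a homomorphism into $(F^3\widehat{\skein{\Sigma_{g,1}}},\bch)$ by Theorem \ref{thm_zeta}, and since $\exp(\bch(x,y)/u)=\exp(x/u)\exp(y/u)$ by the definition of $\bch$, we get $g(\xi_1\xi_2)=g(\xi_1)g(\xi_2)$, which is the content of Proposition \ref{prop_bch_Z}. Iterating and using $\Q[[A+1]]$-linearity of $e_*$, I would rewrite the alternating sum as
\begin{equation*}
\sum_{\epsilon_i\in\shuugou{0,1}}(-1)^{\sum\epsilon_i}z(M(\xi_1^{\epsilon_1}\cdots\xi_k^{\epsilon_k}))=e_*\Big(\prod_{i=1}^k(1-g(\xi_i))\Big),
\end{equation*}
the product taken in the order $i=1,\dots,k$: expanding the right-hand side reproduces the signed sum, with $1$ matching $\epsilon_i=0$ and $-g(\xi_i)$ matching $\epsilon_i=1$.

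Next I would estimate the filtration degree of this product. The hypothesis $\xi_i\in\zeta^{-1}(F^{n_i+2}\widehat{\skein{\Sigma_{g,1}}})$ says $\zeta(\xi_i)\in F^{n_i+2}\widehat{\skein{\Sigma_{g,1}}}$, and the crucial step is that dividing by $u$ drops the filtration by $2$, so that $\zeta(\xi_i)/u\in F^{n_i}\widehat{\skein{\Sigma_{g,1}}}$. Granting this, the product property $F^nF^m\subset F^{n+m}$ gives $(\zeta(\xi_i)/u)^m\in F^{mn_i}\subset F^{n_i}$ for all $m\geq 1$, whence $1-g(\xi_i)=-\sum_{m\geq1}\frac{1}{m!}(\zeta(\xi_i)/u)^m\in F^{n_i}\widehat{\skein{\Sigma_{g,1}}}$, and therefore $\prod_{i=1}^k(1-g(\xi_i))\in F^{n_1+\cdots+n_k}\widehat{\skein{\Sigma_{g,1}}}$. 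Applying Proposition \ref{prop_embed_filtration} to the standard embedding $e$ then yields
\begin{equation*}
e_*\Big(\prod_{i=1}^k(1-g(\xi_i))\Big)\in (A+1)^{\gauss{(n_1+\cdots+n_k+1)/2}}\Q[[A+1]],
\end{equation*}
which is exactly the assertion.

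The hard part is the filtration-division estimate $\zeta(\xi_i)/u\in F^{n_i}$, since the excerpt records only the multiplicativity, the bracket inclusion, and the even-degree identity $F^{2n}=(\ker\varepsilon)^n$ for $\filtn{F^n}$; I would establish it from the construction of the filtration in \cite{Tsujipurebraid}, by the same mechanism that makes $[F^n,F^m]\subset F^{n+m-2}$ hold. If one wishes to avoid this lemma, there is a self-contained route that applies $e_*$ termwise and only uses the stated properties. Expanding, a typical term of $e_*(\prod_i(1-g(\xi_i)))$ has the form $\frac{\pm1}{\prod m_i!}\,e_*\big(\zeta(\xi_1)^{m_1}\cdots\zeta(\xi_k)^{m_k}/u^{M}\big)$ with all $m_i\geq1$ and $M=\sum m_i$. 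Here $\zeta(\xi_1)^{m_1}\cdots\zeta(\xi_k)^{m_k}\in F^{N+2M}$ with $N=\sum m_in_i$, so Proposition \ref{prop_embed_filtration} puts its $e_*$-image in $(A+1)^{M+\gauss{(N+1)/2}}$; since $u=(A+1)\cdot(\text{unit})$ in $\Q[[A+1]]$ and $\Q[[A+1]]$ is an integral domain, cancelling the scalar $u^M$ places the term in $(A+1)^{\gauss{(N+1)/2}}\subset(A+1)^{\gauss{(n_1+\cdots+n_k+1)/2}}$, using $N\geq n_1+\cdots+n_k$. As $e_*$ is continuous and this ideal is closed and contains every term, it contains the sum, giving the result without the division lemma.
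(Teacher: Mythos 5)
Your proposal is correct and follows essentially the same route as the paper: the paper's proof consists precisely of rewriting the alternating sum as $e_*\bigl(\prod_{i=1}^k(1-\exp(\zeta(\xi_i)/(-A+\gyaku{A})))\bigr)$ and then invoking Proposition \ref{prop_embed_filtration}. The only difference is that you spell out the filtration bookkeeping that the paper leaves implicit (including the termwise workaround that avoids the division-by-$(-A+\gyaku{A})$ lemma), which is a faithful elaboration rather than a different argument.
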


\begin{proof}
We have
\begin{align*}
&\sum_{\epsilon_i \in \shuugou{1,0}}(-1)^{\sum \epsilon_i}
z (M(\xi_1^{\epsilon_1} \cdots \xi_k^{\epsilon_k})) \\
&=e_*((1-\exp (\frac{\zeta(\xi_1)}{-A+\gyaku{A}}))\cdots(1-\exp (\frac{\zeta(\xi_k)}{-A+\gyaku{A}})).
\end{align*}
By Proposition \ref{prop_embed_filtration}, we have
\begin{equation*}
\sum_{\epsilon_i \in \shuugou{1,0}}(-1)^{\sum \epsilon_i}
z (M(\xi_1^{\epsilon_1} \cdots \xi_k^{\epsilon_k}))
\in (A+1)^{\gauss{({n_1 + \cdots +n_k+1})/{2}}}\Q [[A+1]].
\end{equation*}
This proves the proposition.

\end{proof}

\begin{cor}
\label{cor_finite_type}
The invariant $z(M) \in \Q[[A+1]]/((A+1)^{n+1})$
is a finite type invariant for $M \in \mathcal{H} (3)$
order $n$.
\end{cor}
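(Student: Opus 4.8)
The plan is to read off the corollary directly from the preceding Proposition, since essentially all the analytic content is already contained there. First I would unwind the definition of ``finite type of order $n$'' from the introduction: with $V = \Q[[A+1]]/((A+1)^{n+1})$, the invariant $z(M) \bmod (A+1)^{n+1}$ is of order $n$ exactly when
\[
\sum_{\epsilon_i \in \shuugou{0,1}} (-1)^{\sum \epsilon_i}\, z\Bigl(M\bigl(\prod_{i=1}^{2n+2}\xi_i^{\epsilon_i}\bigr)\Bigr)
\]
vanishes in $V$ for every $\xi_1, \dots, \xi_{2n+2} \in \mathcal{I}(\Sigma_{g,1})$. Since the projection $\Q[[A+1]] \to V$ is linear, it carries this alternating sum to the analogous alternating sum in $V$, so the condition is equivalent to the corresponding element of $\Q[[A+1]]$ lying in $(A+1)^{n+1}\Q[[A+1]]$.

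Next I would invoke the remark following the preceding Proposition, namely $\mathcal{I}(\Sigma_{g,1}) = \zeta^{-1}(F^3\widehat{\skein{\Sigma_{g,1}}})$, so that each $\xi_i$ satisfies the hypothesis of that Proposition with $n_i = 1$. Applying it with $k = 2n+2$ and $n_1 = \cdots = n_{2n+2} = 1$ places the alternating sum in $(A+1)^{\gauss{(n_1+\cdots+n_k+1)/2}}\Q[[A+1]]$. The only computation is the floor: here $n_1 + \cdots + n_k + 1 = (2n+2)+1 = 2n+3$ and $\gauss{(2n+3)/2} = n+1$, so the sum lies in $(A+1)^{n+1}\Q[[A+1]]$ and hence vanishes modulo $(A+1)^{n+1}$. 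This is exactly the defining condition, proving the corollary.

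As a sanity check I would also run the $(n+1)$-fold formulation over the Johnson kernel $\mathcal{K}(\Sigma_{g,1}) = \zeta^{-1}(F^4\widehat{\skein{\Sigma_{g,1}}})$, taking each $n_i = 2$; then $n_1 + \cdots + n_{n+1} + 1 = 2(n+1)+1 = 2n+3$ once more, giving the same exponent $n+1$ and confirming consistency with the two equivalent definitions of order $n$ (those coming from \cite{GL1997} and \cite{GGP2001}).

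I do not expect a genuine obstacle: all the substance is carried by the preceding Proposition bounding the alternating sum, and what remains is only to align the indexing convention ($2n+2$ Torelli elements, or $n+1$ Johnson-kernel elements, for order $n$) with the floor arithmetic. The one point deserving care is to check that both choices of the $n_i$ really land the exponent at exactly $n+1$, rather than at $n$ or $n+2$, so that the order is neither overstated nor understated.
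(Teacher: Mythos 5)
Your proposal is correct and follows exactly the route the paper intends: the corollary is read off from the preceding proposition by taking all $\xi_i \in \mathcal{I}(\Sigma_{g,1}) = \zeta^{-1}(F^3\widehat{\skein{\Sigma_{g,1}}})$, so each $n_i = 1$, $k = 2n+2$, and $\gauss{(2n+3)/2} = n+1$. Your cross-check with the Johnson-kernel formulation ($n_i=2$, $k=n+1$) is a nice confirmation of the same exponent; nothing further is needed.
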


\begin{prop}
\label{prop_z_finite_nontrivial}
The invariant $z(M) \in \Q[[A+1]]/((A+1)^{n+1})$
is a nontrivial finite type invariant for $M \in \mathcal{H} (3)$
order $n$.
\end{prop}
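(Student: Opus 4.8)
The plan is to verify the nontriviality condition directly, in its equivalent formulation that asks for $n$ elements of the Johnson kernel $\mathcal{K}(\Sigma_{g,1})=\zeta^{-1}(F^4\widehat{\skein{\Sigma_{g,1}}})$ on which the $n$-fold alternating sum is nonzero modulo $(A+1)^{n+1}$; by the equivalence recorded in the introduction (from \cite{GL1997} and \cite{GGP2001}) this suffices. The starting point is the identity established in the proof of the preceding proposition,
\begin{equation*}
\sum_{\epsilon_i \in \shuugou{0,1}}(-1)^{\sum \epsilon_i}
z(M(\xi_1^{\epsilon_1}\cdots\xi_k^{\epsilon_k}))
=e_*\Bigl(\prod_{i=1}^{k}\bigl(1-\exp(\tfrac{\zeta(\xi_i)}{-A+\gyaku{A}})\bigr)\Bigr),
\end{equation*}
valid for any $\xi_1,\dots,\xi_k\in\mathcal{I}(\Sigma_{g,1})$.

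Next I would reuse the disjoint-subsurface construction of Proposition \ref{prop_z_disjoint} (Figure \ref{figure_Sigma_disjoint}). For $g$ large I would choose $\xi_1,\dots,\xi_n$ supported in $n$ pairwise disjoint subsurfaces, with each $\xi_i\in\mathcal{K}(\Sigma_{g,1})$ and $M(\xi_i)=P$, the \Poincare homology sphere. Since the supports are disjoint, $M(\xi_1^{\epsilon_1}\cdots\xi_n^{\epsilon_n})$ is the connected sum of the copies of $P$ indexed by $\shuugou{i:\epsilon_i=1}$, so Proposition \ref{prop_z_disjoint} gives $z(M(\xi_1^{\epsilon_1}\cdots\xi_n^{\epsilon_n}))=z(P)^{\sum\epsilon_i}$. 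Substituting and factoring the sum over the $n$ coordinates yields
\begin{equation*}
\sum_{\epsilon_i \in \shuugou{0,1}}(-1)^{\sum \epsilon_i}
z(M(\xi_1^{\epsilon_1}\cdots\xi_n^{\epsilon_n}))
=\prod_{i=1}^{n}\bigl(1-z(P)\bigr)=(1-z(P))^n .
\end{equation*}

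Then I would extract the leading coefficient. Writing $z(P)=1+a_1(A^4-1)+a_2(A^4-1)^2+\cdots$ with $a_1=-6\lambda(P)$ (the Casson-invariant computation), and using $A^4-1\equiv -4(A+1)\pmod{(A+1)^2}$, one gets $z(P)\equiv 1+24\lambda(P)(A+1)$ and hence $1-z(P)\equiv -24\lambda(P)(A+1)\pmod{(A+1)^2}$. Therefore
\begin{equation*}
(1-z(P))^n\equiv \bigl(-24\lambda(P)\bigr)^n(A+1)^n \pmod{(A+1)^{n+1}}.
\end{equation*}
Since the Casson invariant $\lambda(P)$ is a nonzero integer, the right-hand side is a nonzero element of $\Q[[A+1]]/((A+1)^{n+1})$, which is exactly the required nonvanishing alternating sum.

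The main obstacle is the geometric input of the middle step: realizing the \Poincare sphere (equivalently, a fixed homology sphere with nonzero Casson invariant) by an element of the Johnson kernel $\mathcal{K}(\Sigma_{g,1})$, supported in a prescribed subsurface. For this I would invoke Morita's theorem that $\xi\mapsto\lambda(M(\xi))$ restricts to a nontrivial homomorphism on $\mathcal{K}(\Sigma_{g,1})$, which guarantees that nonzero Casson value is attained inside $\mathcal{K}$, together with the naturality of the Johnson kernel under subsurface inclusion and stabilization, so that $n$ such elements can indeed be placed in disjoint subsurfaces of one common surface. Everything else is the routine expansion of $(1-z(P))^n$ and the connected-sum factorization already available from Proposition \ref{prop_z_disjoint}.
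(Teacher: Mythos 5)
Your strategy is essentially the paper's: place $n$ copies of a Johnson-kernel element realizing the \Poincare sphere in disjoint subsurfaces, use the connected-sum multiplicativity of Proposition \ref{prop_z_disjoint} to reduce the alternating sum to $(1-z(P))^n$, and check that the degree-one coefficient is nonzero. Two points where your write-up diverges from (and is weaker than) the paper's argument deserve attention. First, you justify $1-z(P)\not\equiv 0 \pmod{(A+1)^2}$ via the identity $a_1=-6\lambda(P)$ together with $\lambda(P)\neq 0$; but in the paper that identity is the \emph{corollary} deduced \emph{from} this proposition (nontriviality in order $1$ is what lets one invoke uniqueness of the Casson invariant), so as written your argument is circular within the paper's logical order. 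The fix is what the paper actually does: the explicit computation in Section \ref{section_example} via Habiro's formula gives $z(M(1,1,1))=1-6(A^4-1)+\cdots$ directly, so $1-z(P)\equiv -24(A+1)\pmod{(A+1)^2}$ with no reference to the Casson invariant. Second, where you appeal to Morita's theorem to realize a homology sphere of nonzero Casson invariant inside $\mathcal{K}(\Sigma_{g,1})$, the paper instead exhibits the elements explicitly: $t_i$ is the Dehn twist along the separating curve $t_{c^i_1}\circ t_{c^i_2}(c^i_L)$ of Figure \ref{figure_nontrivial_Sigma}, which lies in the Johnson kernel by construction and satisfies $M(t_i)=P$ by the identification with $+1$-surgery on the trefoil in Section \ref{section_example}. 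Neither issue is fatal, but both the circularity and the existence claim should be replaced by the concrete construction and computation the paper already provides.
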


\begin{proof}
It is enough to show that
 there exists $\xi_1, \cdots , \xi_n \in \mathcal{K}(\Sigma_{g,1})$
such that 
\begin{equation*}
\sum_{\epsilon_i \in \shuugou{0,1}} (-1)^{\sum \epsilon_i}
M(\prod_{i=1}^n {\xi_i}^{\epsilon_i}) \neq 0 \mod ((A+1)^{n+1}).
\end{equation*}
Let $c^1_1,c^1_2,c^1_L, \cdots
c^n_1, c^n_2, c^n_L$
be simple closed curves as in Figure \ref{figure_nontrivial_Sigma}.
We denote $c_i \defeq t_{c^i_1} \circ t_{c^i_2} (c^i_L)$ and
$t_i \defeq t_{c_i}$
for $i =1,  \cdots, n$.
We remark that $M(t_{i_1}\circ \cdots \circ t_{i_k})
=\sharp^k M(t_1)$ for $1 \leq i_1 <\cdots <i_k \leq n$
and that $M(t_1)$ is the \Poincare homology 3-sphere. 
By the computation in section
\ref{section_example}
and Proposition \ref{prop_z_disjoint},
\begin{equation*}
\sum_{\epsilon_i \in \shuugou{0,1}} (-1)^{\sum \epsilon_i}
M(\prod_{i=1}^n {t_i}^{\epsilon_i})
=(1-z(M(t_1)))^n=6^n(A^4-1)^n \mod ((A+1)^{n+1}).
\end{equation*}

This proves the proposition.

\begin{figure}
\begin{picture}(400,140)
\put(0,0){\includegraphics[width=400pt]{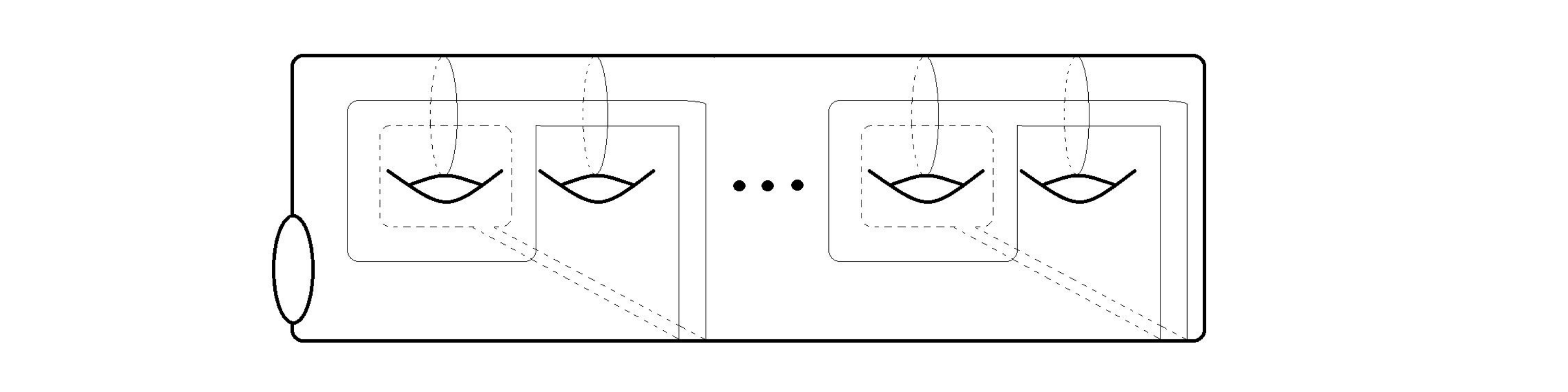}}
\put(100,90){$c^1_1$}
\put(150,90){$c^1_2$}
\put(100,20){$c^1_L$}
\put(230,90){$c^n_1$}
\put(280,90){$c^n_2$}
\put(230,20){$c^n_L$}
\end{picture}
\caption{$c^1_1,c^1_2,c^1_L, \cdots
c^n_1, c^n_2, c^n_L$}
\label{figure_nontrivial_Sigma}
\end{figure}
\end{proof}

By the computation in section
\ref{section_example}, the coefficient of $A^4-1$ in 
the invariant $z$ for the \Poincare homology $3$-sphere
is $-6$.
Since the casson invariant is the unique 
nontrivial finite type invariant of order $1$
upto a scalar, we have the following.

\begin{cor}
For any $M \in \mathcal{H}(3)$,
we have the coefficeint of $A^4-1$ in $z(M)$
is $(-6)$ times the Casson invariant.
\end{cor}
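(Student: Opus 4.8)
The plan is to regard the coefficient of $A^4-1$ in $z$ as a $\Q$-valued invariant $a_1 \colon \mathcal{H}(3) \to \Q$, to recognize it as a finite type invariant of order one that vanishes on $S^3$, and then to appeal to the one-dimensionality of the space of such invariants together with the value on the \Poincare sphere to fix the scalar. The conclusion is then immediate.

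First I would set $u = A+1$ and record that modulo $u^2$ one has $A^2 \equiv 1-2u$, hence $A^4 \equiv (1-2u)^2 \equiv 1 - 4u$, so that $A^4 - 1 \equiv -4(A+1) \pmod{(A+1)^2}$. Writing $z(M) = 1 + a_1(M)(A^4-1) + \cdots$, this gives
\begin{equation*}
z(M) \equiv 1 - 4\,a_1(M)\,(A+1) \pmod{(A+1)^2}.
\end{equation*}
By Corollary \ref{cor_finite_type} (the case $n=1$), the reduction $z(M) \bmod (A+1)^2$ is a finite type invariant of order one, valued in $\Q[[A+1]]/((A+1)^2)$. Composing with the $\Q$-linear projection onto the coefficient of $(A+1)$ and dividing by $-4$ exhibits $a_1$ itself as a finite type invariant of order at most one. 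Moreover $a_1(S^3) = 0$, since $S^3 = M(\id)$ and $Z(\id) = 1$, whence $z(S^3) = 1$.

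Next I would invoke the classification of order-one finite type invariants of integral homology $3$-spheres: modulo the constant (order zero) invariants, this space is one-dimensional and is spanned by the Casson invariant $\lambda$, normalized so that $\lambda(S^3)=0$. Since $a_1$ is an order-one invariant with $a_1(S^3)=0$, it follows that $a_1 = c\,\lambda$ for some $c \in \Q$. To pin down $c$, I would evaluate at the \Poincare homology sphere $P$: the computation of section \ref{section_example} gives $a_1(P) = -6$, while in this normalization $\lambda(P) = 1$. Hence $c = -6$, and therefore $a_1 = -6\,\lambda$; equivalently, the coefficient of $A^4-1$ in $z(M)$ equals $(-6)$ times the Casson invariant of $M$ for every $M \in \mathcal{H}(3)$. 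Proposition \ref{prop_z_finite_nontrivial} already guarantees $a_1 \not\equiv 0$, so $c \neq 0$, consistently with the above.

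The substantive inputs lie outside the present bookkeeping: the one-dimensionality of the order-one part of the finite type filtration (which forces any such invariant to be a multiple of Casson) and the normalized value $\lambda(P)=1$. The only in-paper step requiring care is matching the sign and normalization conventions, so that the scalar read off from $a_1(P)=-6$ is consistent across the identification $a_1 = c\lambda$; everything else reduces to the elementary congruence $A^4-1 \equiv -4(A+1)$ above.
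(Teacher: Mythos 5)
Your proposal is correct and follows essentially the same route as the paper: the paper likewise observes that the order-one part of $z$ is a finite type invariant of order $1$, invokes the fact that the Casson invariant is the unique such nontrivial invariant up to scalar, and fixes the scalar by the computed value $a_1 = -6$ on the \Poincare homology sphere from Section \ref{section_example}. Your additional bookkeeping (the congruence $A^4-1 \equiv -4(A+1) \bmod (A+1)^2$ and the vanishing at $S^3$) only makes explicit steps the paper leaves implicit.
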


\section{Example}
\label{section_example}

Let $c_1$, $c_2$ and $c_L$ be simple closed curves in $\Sigma_{g,1}$ as in 
Figure \ref{figure_handle_example}.
We consider integral homology 3-spheres
$M(\epsilon_1,\epsilon_2,\epsilon_3) \defeq 
M ((t_{{t_1}^{\epsilon_1} \circ {t_2}^{\epsilon_2} (c_L)})^{\epsilon_3})$
for $\epsilon_1, \epsilon_2, \epsilon_3 \in \shuugou{\pm 1}$,
where $t_1 \defeq t_{c_1}$ and $t_2  \defeq t_{c_2}$.
For $\epsilon \in  \shuugou{\pm 1}$,
the manifold $M(1,-1,\epsilon) \simeq M(-1,1,\epsilon)$  is 
the integral homology 3-sphere
obtained from $S^3$ performing  the $\epsilon$-surgery on the figure eight knot $4_1$,
which is $e (t_1^{-1} \circ t_2 (c_L))$.
For $\epsilon  \in \shuugou{\pm 1}$,
the manifold $M(1,1,\epsilon)$  is 
the integral homology 3-sphere
obtained from $S^3$ performing  the $\epsilon$-surgery on the trefoil knot $3_1$,
which is $e (t_1 \circ t_2 (c_L))$.
For $\epsilon \in \shuugou{\pm 1}$,
the manifold $M(-1,-1,\epsilon)$  is 
the integral homology 3-sphere
obtained from $S^3$ performing  the $\epsilon$-surgery on the mirror  $-3_1$
of the trefoil knot,
which is $e (t_1^{-1} \circ t_2^{-1} (c_L))$.
In particular, $M(1,1,1)$ is the \Poincare homology sphere.
We remark that $M(1,-1,1)$ and $M(-1,-1,-1)$
are the same $3$-manifold.
By straight forward computations using Habiro's formula \cite{Habiro2000}
for colored Jones polynomials of the trefoil knot and the figure eight knot,
 we have the following.
We remark that we compute
$z(M(1,-1,1))=z(M(1,1,-1)) $ (rep. 
$z(M(1,-1,-1))=z(M(-1,-1,1))$)  by two ways
$Z(t_{{t_1} \circ {t_2}^{-1} (c_L)})=
Z((t_{{t_1} \circ {t_2} (c_L)})^{-1})$
(resp. $Z((t_{{t_1} \circ {t_2}^{-1} (c_L)})^{-1})
=Z(t_{{t_1}^{-1} \circ {t_2}^{-1} (c_L)})$.

\begin{prop}
We have 
\begin{align*}
z(M(1,1,1)) 
&=[ 1, -6,45,-464,6224,-102816,2015237, \\
&-45679349,1175123730,-33819053477,  \\
&1076447743008, -37544249290614, \\
&1423851232935885,-58335380481272491], \\
z(M(1,-1,1))=z(M(1,1,-1) 
&=[ 1,6,63,932,17779,415086,11461591,365340318, \\
&13201925372,533298919166,23814078531737, \\
&1164804017792623,61932740213389942, \\
&3556638330023177088], \\
z(M(-1,-1,-1)) 
&=[1, 6, 39, 380,  4961,  80530,  1558976,  35012383, \\
 &894298109,  25591093351,  810785122236, \\
&28169720107881,  1064856557864671,  \\
&43506118030443092], \\
z(M(1,-1,-1))=z(M(-1,-1,1) 
&=[  1, -6,  69,  -1064,  20770,  -492052,  13724452,  \\
&-440706098,  16015171303,  -649815778392,  29121224693198,  \\
&-1428607184648931,  76147883907835312,   \\
&-4382222160786508572].
\end{align*}
Here we denote 
\begin{equation*}
1+a_1 (A^4-1) +a_2(A^4-1)^2 + \cdots+a_{13}(A^4-1)^{13}+o(14)
=[1,a_1, a_2, \cdots,a_{13}]
\end{equation*}
where $o(14) \in (A+1)^{14} \Q [[A+1]]$.
\end{prop}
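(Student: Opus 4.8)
The plan is to reduce each value $z(M(\epsilon_1,\epsilon_2,\epsilon_3))$ to the evaluation of a single surgery element in the completed skein algebra, and then to expand the resulting one-variable series using Habiro's cyclotomic formula. First, since each curve $c=t_1^{\epsilon_1}\circ t_2^{\epsilon_2}(c_L)$ is the image of the separating curve $c_L$ under a diffeomorphism, it is itself separating; hence by the remark following Theorem \ref{thm_zeta} and the fact that $\zeta$ is a homomorphism into $(F^3\widehat{\skein{\Sigma_{g,1}}},\bch)$, we have $\zeta(t_c^{\epsilon_3})=\epsilon_3 L(c)$ for $\epsilon_3\in\shuugou{\pm 1}$, so that
\[
z(M(\epsilon_1,\epsilon_2,\epsilon_3))=Z(t_c^{\epsilon_3})=e_*\Big(\exp\big(\tfrac{\epsilon_3 L(c)}{-A+\gyaku{A}}\big)\Big).
\]
Thus everything is governed by the single skein element $\Omega_c^{\epsilon_3}\defeq\exp(\epsilon_3 L(c)/(-A+\gyaku{A}))$ and its image under the standard embedding $e_*$ into $\Q[[A+1]]$.

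Next I would diagonalize. Working in an annular neighborhood of $c$ in the Chebyshev / Jones--Wenzl eigenbasis, the curve $c$ acts on the $n$-th idempotent by its loop value, and by the definition $L(c)=\frac{-A+\gyaku{A}}{4\log(-A)}(\arccosh(-c/2))^2-(-A+\gyaku{A})\log(-A)$ the operator $L(c)$ is scalar on each eigenspace. Consequently $\Omega_c^{\epsilon_3}$ acts by the ribbon/twist eigenvalue $\mu_n^{\epsilon_3}$ implementing the corresponding $\pm1$ framing change, in accordance with the Dehn twist formula $t_c=\exp(\sigma(L(c)))$ of \cite{TsujiCSAI}. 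Applying $e_*$ then converts the evaluation into a Kirby-colour sum
\[
e_*(\Omega_c^{\epsilon_3})=\frac{\sum_{n\ge 0}\mu_n^{\epsilon_3}\,[n+1]\,J_K(n)}{\sum_{n\ge 0}\mu_n^{\epsilon_3}\,[n+1]^2},
\]
normalized so that $z(S^3)=Z(\id)=1$, where $K=e(c)$ is the underlying knot (trefoil, mirror trefoil, or figure eight according to $(\epsilon_1,\epsilon_2)$) and $J_K(n)$ is its colored Jones polynomial.

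Now I would insert Habiro's formula \cite{Habiro2000}. Using the cyclotomic expansion of $J_K$ with the substitution $q=A^4$, whose coefficients are $1$ for the figure eight and explicit monomials for $\pm 3_1$, the double sum rearranges into Habiro's Laplace-transformed form. Each cyclotomic product lies in the ideal $((q-1)^{k})=((A^4-1)^{k})=((A+1)^{k})$, so only finitely many terms ($k\le 13$) contribute modulo $(A+1)^{14}$, and expanding in $(A^4-1)$ yields the stated coefficient vectors. The coincidences $M(1,-1,1)\simeq M(1,1,-1)$ and $M(1,-1,-1)\simeq M(-1,-1,1)$ are then verified independently by computing the same manifold through both surgery presentations, namely $Z(t_{t_1\circ t_2^{-1}(c_L)})=Z((t_{t_1\circ t_2(c_L)})^{-1})$ and $Z((t_{t_1\circ t_2^{-1}(c_L)})^{-1})=Z(t_{t_1^{-1}\circ t_2^{-1}(c_L)})$; agreement of the two computations gives a strong consistency check on the arithmetic.

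The main obstacle is the dictionary in the second step: pinning down the precise twist eigenvalue $\mu_n^{\epsilon_3}$ (hence the correct sign of the surgery coefficient relative to the orientation conventions fixing which $\epsilon_3$ gives the \Poincare sphere, and the substitution $q=A^4$) and the normalization forcing $z(S^3)=1$, together with verifying that the Kirby-colour sum genuinely converges in $\Q[[A+1]]$ and equals $e_*(\Omega_c^{\epsilon_3})$. Once this identification of $z$ on knot-surgeries with the Habiro-type surgery sum is established, the remaining work --- substituting the cyclotomic coefficients of $3_1$, $-3_1$, $4_1$ and expanding to order $13$ --- is finite and mechanical.
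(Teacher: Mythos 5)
Your first reduction is correct and is exactly what the paper uses: since $c=t_1^{\epsilon_1}\circ t_2^{\epsilon_2}(c_L)$ is separating, $\zeta(t_c^{\epsilon_3})=\epsilon_3 L(c)$, hence $z(M(\epsilon_1,\epsilon_2,\epsilon_3))=e_*\bigl(\exp\bigl(\epsilon_3 L(c)/(-A+\gyaku{A})\bigr)\bigr)$, and the two-presentation consistency checks you list are precisely the ones the author remarks on. The genuine gap is the second step, the identity
$e_*(\Omega_c^{\epsilon_3})=\bigl(\sum_n\mu_n^{\epsilon_3}[n+1]J_K(n)\bigr)/\bigl(\sum_n\mu_n^{\epsilon_3}[n+1]^2\bigr)$.
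First, as written neither the numerator nor the denominator converges in $\Q[[A+1]]$; only after Habiro's rearrangement does such a ratio acquire meaning, so the formula cannot be taken as the definition of the quantity you are computing. Second, and more seriously, the justification offered conflates two different objects: the \emph{operator} $t_c=\exp(\sigma(L(c)))$ on $\widehat{\skein{\Sigma_{g,1},J}}$, which indeed acts on Jones--Wenzl idempotents through the annulus by twist eigenvalues, and the \emph{element} $\exp(L(c)/(-A+\gyaku{A}))$ of the completed skein module of an annular neighborhood of $c$, which is what $e_*$ is applied to. Knowing the eigenvalues of the former does not give you the expansion coefficients of the latter in the $S_n$-basis, and in particular does not produce the Kirby colour $[n+1]$ nor the global denominator. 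Asserting that $e_*(\Omega_c^{\epsilon_3})$ equals the normalized Reshetikhin--Turaev surgery sum is essentially the paper's Expectation that $z(M)=\tau(M)_{|q=A^4}$, which the author leaves open and verifies only modulo $(A+1)^{14}$ for the \Poincare sphere; so this step assumes what is not available.

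What the computation actually requires (and what the paper's terse ``straightforward computations using Habiro's formula'' must mean) is to expand the specific element $\exp\bigl(\epsilon_3 L(c)/(-A+\gyaku{A})\bigr)$ directly from the closed formula $L(c)=\frac{-A+\gyaku{A}}{4\log(-A)}(\arccosh(-c/2))^2-(-A+\gyaku{A})\log(-A)$: writing $c=-\lambda-\lambda^{-1}$ one gets $\frac{1}{-A}\exp\bigl(\frac{(\log\lambda)^2}{4\log(-A)}\bigr)$, a well-defined series in $(c+2)$ and $(A+1)$ in the completed annulus skein module; one then rewrites this series in a basis adapted to Habiro's cyclotomic expansion and pairs it with the cyclotomic coefficients of the colored Jones polynomials of $3_1$, $-3_1$ and $4_1$, truncating modulo $(A+1)^{14}$. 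Your final paragraph correctly identifies the framing/sign bookkeeping and convergence of the pairing as the remaining issues, but these are exactly the points your proposed Kirby-colour shortcut fails to resolve.
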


\begin{figure}
\begin{picture}(300,140)
\put(0,0){\includegraphics[width=300pt]{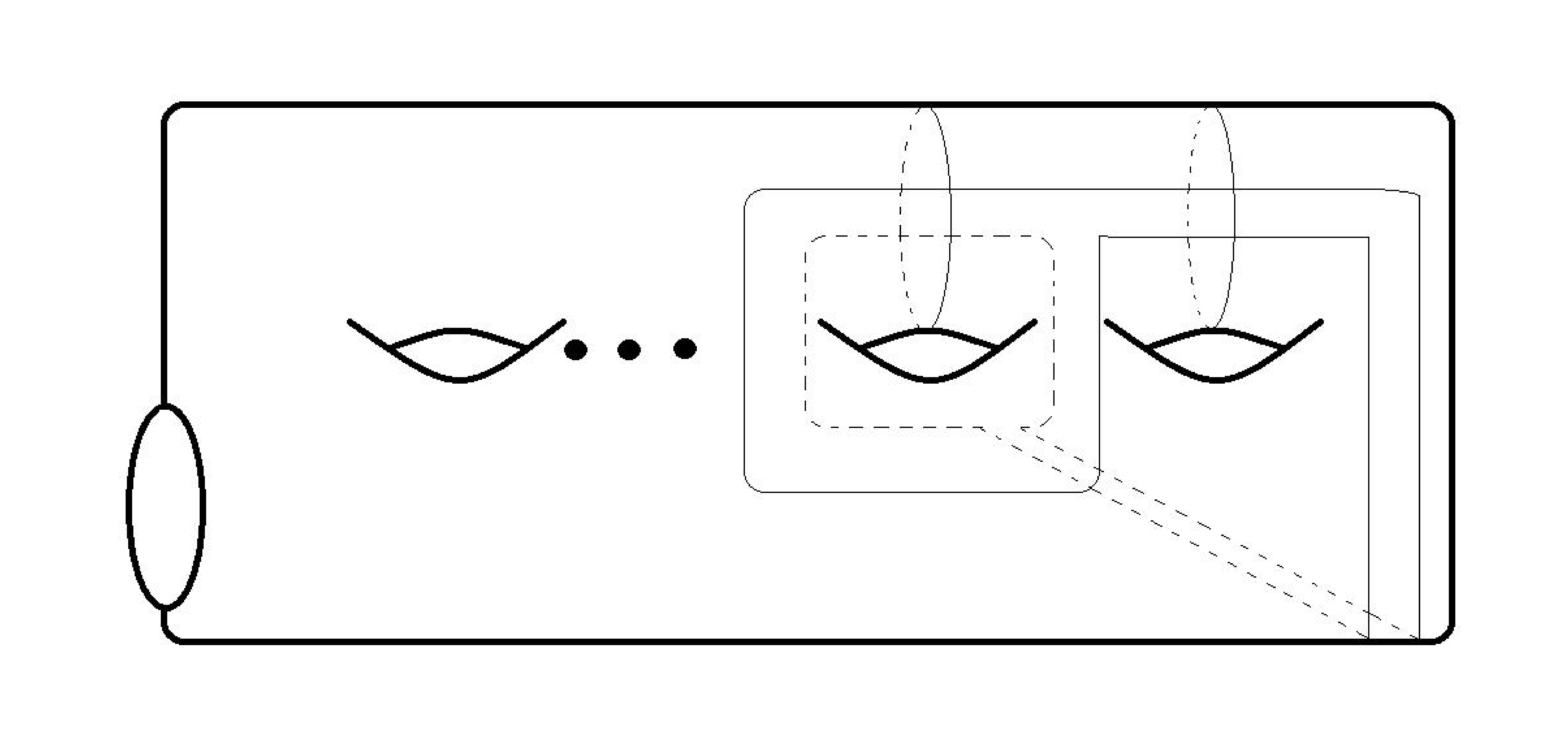}}
\put(37,90){$H_g^+$}
\put(230,130){$c_1$}
\put(180,40){$c_L$}
\put(177,130){$c_2$}
\put(14,90){$H_g^-$}
\end{picture}
\caption{simple closed curves $c_1$, $c_2$ and $c_L$}
\label{figure_handle_example}
\end{figure}

\end{document}